\numberwithin{equation}{section}
\DeclareFontFamily{OT1}{rsfs}{}
\DeclareFontShape{OT1}{rsfs}{n}{it}{<-> rsfs10}{}
\DeclareMathAlphabet{\mathscr}{OT1}{rsfs}{n}{it}
\theoremstyle{plain}
\newtheorem{theorem}{Theorem}[section]
\newtheorem{prop}[theorem]{Proposition}
\newtheorem{lemma}[theorem]{Lemma}
\theoremstyle{definition}
\newtheorem{remark}[theorem]{Remark}
\newcommand{\bal}{\[\begin{aligned}}
\newcommand{\eal}{\end{aligned}\]}
\newcommand{\beq}{\begin{equation}}
\newcommand{\eeq}{\end{equation}}
\newcommand{\SN}{{\mathbb{N}}}
\newcommand{\beeq}{\begin{equation}}\newcommand{\eneq}{\end{equation}}
\def\<{\langle}             \def\>{\rangle}
\newcommand{\al}{\alpha}    \newcommand{\be}{\beta}
    \newcommand{\la}{\lambda}
\newcommand{\ga}{\Gamma}   
\newcommand{\R}{\mathbb{R}}
\newcommand{\N}{\mathbb{N}}
\newcommand{\CC}{\mathbb{C}}
\newcommand{\pa}{\partial}
\newcommand{\les}{{\lesssim}}
\newcommand{\hn}{\mathbb{H}^n}
\newcommand{\e}{\eta}
\newcommand{\De}{\Delta}
\newcommand{\shr}{\frac{1}{\sinh{r}}}
\newcommand{\shu}{\frac{1}{\sinh{u}}}
\newcommand{\hf}{\frac{1}{2}}
\newcommand{\f}{\mathscr{F}}
\newcommand{\uf}{\frac{1}{u}}
\newcommand{\hz}{\tilde{h_z}}
\newcommand{\hzz}{\tilde{\tilde{h_z}}}
\newcommand{\one}{\uppercase\expandafter{\romannumeral1}}
\newcommand{\two}{\uppercase\expandafter{\romannumeral2}}
\newcommand{\three}{\uppercase\expandafter{\romannumeral3}}
\newcommand{\four}{\uppercase\expandafter{\romannumeral4}}
\newcommand{\five}{\uppercase\expandafter{\romannumeral5}}
\newcommand{\six}{\uppercase\expandafter{\romannumeral6}}
\newcommand{\sev}{\uppercase\expandafter{\romannumeral7}}
\newcommand{\eig}{\uppercase\expandafter{\romannumeral8}}
\title
{An alternative Proof of Tataru's dispersive estimates}
\author{Chengbo Wang}
\address{School of Mathematical Sciences\\ Zhejiang University\\ Hangzhou 310027,P.R.China}
\email{wangcbo@zju.edu.cn}
\urladdr{http://www.math.zju.edu.cn/wang}
\author{Xiaoran Zhang$^{*}$}\thanks{* Corresponding author}
\address{School of Mathematical Sciences\\ Zhejiang University\\ Hangzhou 310027,P.R.China}
\email{1025391337@qq.com}
\thanks{
The authors were supported  by NSFC 
11971428 and  NSFC 12141102.}
\date{\today}
\begin{document}

\begin{abstract}
The aim of this article is to give an alternative proof of  Tataru's dispersive estimates for wave equations posed on the hyperbolic space. Based on the formula for the wave kernel on $\hn$, 
we give the proof from the perspective of Bessel potentials, by
exploiting various facts about Gamma functions, modified Bessel functions, and Bessel potentials.
 This leads to our proof being more self-contained than that in Tataru \cite{MR1804518}.
\end{abstract}

\keywords{dispersive estimates; Bessel potentials; Strichartz estimates; hyperbolic space}

\subjclass[2010]{35L05, 35L15, 58J45, 35B45,
35R01, 46F10, 33C10, 35L71}

\maketitle

\section{Introduction}

In the seminal paper \cite{MR1804518}, Tataru gave an alternative proof of  the Georgiev-Lindblad-Sogge theorem
\cite{GLS97}, concerning on the global existence part of the Strauss conjecture, for nonlinear wave equations posed on $\R^n\times \R$.
By using the relation between wave equations on hyperbolic space $\hn\times\R$ and wave equations on $\R^n \times \R$, he reduced the proof of the weighted Strichartz estimates on $\R^n$ to the dispersive estimates for wave equations on $\hn$.

Let $n\ge 2$, $\rho=(n-1)/2$,
considering wave equations on the hyperbolic space $\hn$, 
$$(\partial_t^2-\De_{\hn}-\rho^2)w=0,\ w(0,x)=w_0(x),\ w_t(0,x)=w_1(x),$$
the solutions can be expressed by $w(t,x)=S(t)w_1(x)+C(t)w_0(x)$ with
\begin{align*}
S(\tau)=\frac{\sin D_0\tau}{D_0},\ C(\tau)=\cos D_0\tau,
\end{align*}
where $D_0=\sqrt{-\Delta_{\hn}-\rho^2}$.
Let $\be>\rho$ and $D=\sqrt{D_0^2+\be^2}$, then
the Tataru's dispersive estimates \cite{MR1804518}
state that
\beeq
\label{Eq:pro1} \|S(\tau)f\|_{L^q}\lesssim \frac{(1+\tau)^{\frac{2}{q}}}{(\sinh \tau)^{(n-1)(\hf-\frac{1}{q})}} \|D^{(n+1)(\hf-\frac{1}{q})-1} f\|_{L^{q^{\prime}}},\  2\leq q<\infty,\eneq
\beeq\label{Eq:pro2} \|C(\tau)f\|_{L^q}\lesssim \frac{1}{(\sinh \tau)^{(n-1)(\hf-\frac{1}{q})}} \|D^{(n+1)(\hf-\frac{1}{q})} f\|_{L^{q^{\prime}}}, 2\leq q<\infty.
\eneq

As is well-known, the dispersive estimates are of fundamental importance in our understanding of the wave equations. For example, it is known from 
Anker-Pierfelice-Vallarino
\cite{MR2902129,MR3345662} that we have Strichartz estimates for a larger range of Lebesgue exponents, and thus
the Strauss conjecture on the hyperbolic space
 for a larger range of nonlinear powers, that is, 
we have small data global existence 
for the Cauchy problem of 
the nonlinear wave equations of the form
$(\partial_t^2-\De_{\hn}-\rho^2)w=\pm |w|^p, \pm |w|^{p-1}w$,
for any sub-conformal and conformal powers ($1<p\le 1+4/(n-1)$).
In the work \cite{MR4026182} of  Sire, Sogge and the first author,
it was shown that Tataru's dispersive estimates could be exploited to give a short alternative proof of the Strauss conjecture on the hyperbolic space.
For recent works on related problems posed on (asymptotically) hyperbolic spaces, we refer the interested readers to 
\cite{MR4169670}
and references therein.

In Tataru's proof, 
in order to use Stein's complex interpolation theorem,
he constructed
the analytic family of operators, with symbol
$$S_{\tau}^{z}(\lambda)=\frac{e^{z^2}}{\ga(z+\rho)} \frac{\sin\lambda\tau}{\lambda}(\lambda^2+\be^2)^{\hf z},\ \be>\rho,\ 
\Re z\in [-\rho, 1]\ 
.$$
Then the kernel $K_{\tau}^z(s)$ is written as
$$
K_{\tau}^z(s)=\int_0^{\infty} S_{\tau}^z(\lambda)\Phi_{\lambda}(s)|c(\lambda)|^{-2}d\lambda,
$$
where $\Phi_{\lambda}(s)$ is from the Fourier transform on spherically symmetric function (“spherical transform”) in $\hn$,
$$
\Phi_{\lambda}(s)=\frac{2^{\rho-1}\ga(\rho+\hf)}{\ga(\rho)(\sinh s)^{2\rho-1}}\int_{-s}^s e^{i\lambda\mu}(\cosh s-\cosh \mu)^{\rho-1}d\mu,
$$
and $c(\lambda)$ is the Harish-Chandra c-function
$$
c(\lambda)=\frac{2^{2\rho-1}\ga(\rho+\hf)\ga(i\lambda)}{\pi^{\hf}\ga(\rho+i\lambda)}.
$$
With the help of these formulas,
 the problem is reduced to controlling $\Phi_{\lambda}(s)$ and $c(\lambda)$ as the main part. On the other hand, we would like to point out that, in the proof, the condition $\be>\rho$ seems to be necessary (for the $L^1\to L^\infty$ estimate) to obtain the appropriate decay of the Fourier transform of $\lambda^{-1}(\lambda^2+\be^2)^{\hf z}|c(\lambda)|^{-2}$, which is holomorphic $S^{\rho-1}$ symbol inside the strip $|\Im\lambda|\leq \rho$ except simple poles at $\pm i\rho$.

In the recent work \cite{MR4026182}, when the spatial dimension is three ($n=3$), Sire, Sogge and the first author gave an alternative proof of Tataru's dispersive estimates with $\be=\rho$, by transferring the logarithmic growth of Bessel potentials to ``$\Im z$". 
Moreover, the proof is slightly more self-contained than the one in  \cite{MR1804518}, since it 
relies only on simple facts about Bessel potentials, and
avoids the heavy spherical analysis on hyperbolic space such as the Harish-Chandra c-function.

As the fundamental solution for the three dimensional wave equation is of the simplest form among all of the spatial dimensions $n\ge 2$, it is interesting to see whether such an elementary alternative proof is possible for other spatial dimensions, which could help 
understanding  the dispersive nature of the wave equations.

In this paper, we make a slight modification of Tataru's argument by introducing the following analytic family of operators
\beeq\label{eq-Stein}
S_z(t) =(z+\rho) e^{z^2}D^z \frac{\sin tD_0}{D_0},\ 
C_z(t) =(z+\rho) e^{z^2}D^{-1+z}\cos tD_0
\ ,\eneq
where $D=\sqrt{D_0^2+\be^2}$.
Then we have the following
\begin{theorem}\label{thm-main}
Let $\be\geq \rho$ for $n=3$ and $\be>\rho$ for $n \neq 3$, there is a constant $C$, independent of $\Im z$, so that for any $z$ with $\Re  z=-\rho$,
\beeq
\label{pro1} \|S_z (t)\|_{L^1(\hn)\to L^{\infty}(\hn)}, \|C_z (t)\|_{L^1(\hn)\to L^{\infty}(\hn)} \leq C(\sinh t)^{-\rho},\ \forall t>0\ .\eneq
\end{theorem}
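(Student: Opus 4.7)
Since $S_z(t)$ and $C_z(t)$ are radial spectral multipliers on $\hn$ (functions of $D_0$), their operator norms $L^1(\hn)\to L^\infty(\hn)$ equal the $L^\infty$ norms of their convolution kernels, which depend only on the geodesic distance. The plan is to obtain a closed-form expression for these kernels and bound them pointwise, uniformly in the distance variable and in $\Im z$.

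The first step is to write the kernel of $\sin(tD_0)/D_0$ explicitly using the dimension-shift formula: if $\mathcal{K}^{(n)}(r)$ is the radial kernel on $\hn$ of a spectral multiplier of $D_0^2$, then
\[
\mathcal{K}^{(n+2)}(r) \;=\; -\frac{1}{2\pi\sinh r}\,\pa_r \mathcal{K}^{(n)}(r),
\]
a property made possible precisely by the shift $-\rho^2$ in the definition of $D_0^2$. In the base dimensions the kernel of $\sin(tD_0)/D_0$ is elementary: for $n=3$ it is the Huygens-type distribution $\delta(r-t)/(4\pi\sinh r\sinh t)$, and for $n=2$ a Sonine-type kernel proportional to $1_{\{r<t\}}/\sqrt{\cosh t-\cosh r}$. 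The targeted factor $(\sinh t)^{-\rho}$ will arise from the $\sinh t$ in the $n=3$ base denominator together with the iterated geometric weights $(\sinh r)^{-1}$ accumulated during descent and evaluated near $r=t$.

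Next I would apply the same descent procedure to the Bessel potential $D^z=(D_0^2+\be^2)^{z/2}$, whose base-dimension radial kernel has the classical Bessel-potential form involving the modified Bessel function $K_{\nu(z)}(\be r)$ with index $\nu$ linear in $z$; this follows from the subordination formula $A^{-\alpha/2}=\Gamma(\alpha/2)^{-1}\int_0^\infty s^{\alpha/2-1}e^{-sA}\,ds$ applied to $A=D_0^2+\be^2$, combined with the explicit shifted heat kernel on $\hn$. Composing the two radial kernels yields a hyperbolic convolution: for odd $n$ the $\delta$-support collapses it to a spherical average of the Bessel-potential kernel over a geodesic sphere of radius $t$, and for even $n$ one is left with a one-dimensional integral of $K_{\nu(z)}(\be s)$ against the weight $(\cosh t-\cosh s)^{-1/2}$ on $\{s<t\}$. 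The normalization is chosen so that the prefactor $(z+\rho)$ cancels the pole of $1/\Gamma(z+\rho)$ at $z=-\rho$ produced by the descent, while $e^{z^2}$, of modulus $e^{\rho^2-(\Im z)^2}$ on the critical line, absorbs the tails of $\Gamma$-functions and the large-$|\nu|$ growth of $K_{\nu(z)}$, yielding the required uniformity in $\Im z$.

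The main obstacle I anticipate is the estimation in the even-dimensional case $n\neq 3$, where the Bessel-potential kernel $K_{\nu(z)}(\be s)$ has a logarithmic or power singularity at $s=0$ with implicit $\Im z$-dependence, while the wave kernel has an inverse-square-root endpoint singularity at $s=t$. My strategy is to split the $s$-integration into the regions near $0$, intermediate, and near $t$, using respectively the small- and large-argument asymptotics of $K_{\nu(z)}$ combined with sharp bounds for Sonine-type integrals, to produce a pointwise bound of exactly $C(\sinh t)^{-\rho}$. The hypothesis $\be>\rho$ for $n\neq 3$ will ensure that the exponential decay of $K_{\nu(z)}(\be\cdot)$ dominates the exponential growth of the Plancherel weight at large geodesic radius, in line with the introductory remark; in the $n=3$ case the $\delta$-support removes this interaction and allows the endpoint $\be=\rho$, essentially recovering the argument of \cite{MR4026182}.
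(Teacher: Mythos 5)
Your proposal shares the paper's starting point -- Taylor's descent formula on $\hn$ -- but then diverges in a way that makes the subsequent work substantially harder. The paper does \emph{not} factor the operator into $\sin(tD_0)/D_0$ composed with $D^z$ and then convolve two radial kernels on $\hn$; instead it applies Taylor's formula for $m(D_0)$ directly to the \emph{product} symbol $m(\lambda)=(z+\rho)e^{z^2}(\lambda^2+\be^2)^{z/2}\sin(t\lambda)/\lambda$. This is decisive: the Euclidean Fourier transform $\hat m(r)$ then consists simply of differences of translates $F_z(\be(r\pm t))$ of the flat one-dimensional Bessel potential $F_z=\f(\<\eta\>^z)$, which is exactly what Lemmas \ref{lem-2.1} and \ref{thm-series} are built to estimate. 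Your route instead requires (i) identifying the radial kernel of $D^z$ on $\hn$ (you invoke subordination through the shifted heat kernel, which is much less explicit than the 1D flat $F_z$), and (ii) performing a hyperbolic spherical-mean/Sonine convolution of that kernel against the wave propagator, producing a genuinely two-dimensional integral in even dimensions where the paper has a one-dimensional one. You correctly anticipate that this even-dimensional case is the hard part, but the convolution structure you propose would force you to control the interaction of two singular kernels simultaneously, whereas the paper's formulation already collapses that into the single variable $u$ with the Bessel potential appearing only as $F_z(\be(u\pm t))$.

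There is also a concrete conceptual gap in your explanation of the normalization. You assert that $(z+\rho)$ ``cancels the pole of $1/\Gamma(z+\rho)$ at $z=-\rho$ produced by the descent.'' In this paper there is no $\Gamma(z+\rho)$ anywhere: Taylor's descent formula introduces no such factor, and the $1/\Gamma(z+\rho)$ you have in mind is a feature of Tataru's original construction, which the authors deliberately replaced by the plain factor $(z+\rho)$. In the present proof $(z+\rho)=is$ on the critical line is used for a different and more delicate reason: when $\Re z$ is a negative odd integer and the derivative order $j$ hits $-\Re z-1$, the Bessel potential bound in \eqref{bpdezero} degenerates to $\big(\tfrac{1}{|\Im z|}\wedge\ln\tfrac1x\big)+1$, and the prefactor $|s|$ is needed to absorb the $1/|s|$ blow-up uniformly -- this is exactly the mechanism in Lemma \ref{lem-2.3} and in the treatment of the series pieces involving $1/\sin(\nu\pi)$. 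If you pursue your route you would still need to rediscover this point, since the logarithmic degeneracy of $K_{\nu}$ at purely imaginary order near the origin is an intrinsic feature of the problem, not an artifact of the paper's particular reduction. Your account of $e^{z^2}$ absorbing the $e^{c|\Im z|}$ growth from Gamma and Bessel bounds is, by contrast, essentially right.
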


Notice that,
 by the spectral theorem, we have the following uniform estimates
\beeq\label{pro2} \|S_z (t)\|_{L^2(\hn)\to L^2(\hn)}\leq C(1+t),\|C_z (t)\|_{L^2(\hn)\to L^{2}(\hn)} \leq C\ \forall t>0\ , \eneq
when $\Re z=1$.
By Stein's complex interpolation theorem, \eqref{pro1} and \eqref{pro2} yield  \eqref{Eq:pro1} and \eqref{Eq:pro2}.

Let us conclude this section by discussing a little more about the proof of
Theorem \ref{thm-main}.
Instead of using the spherical transform, we want to start with the functional calculus and wave kernel on $\hn$.
As Bessel potentials arise naturally from the formulas,
we try to give the proof, by exploiting Bessel potentials and its asymptotic behavior. In \S \ref{sec:Bessel}, we introduce Bessel potentials and present several precise estimates of them, which will be crucial in the sequel. In addition, in \S  \ref{sec:wave}, we give some technical treatments on the wave kernel and reduce the proof
of Theorem \ref{thm-main}
to
Proposition \ref{thm-reduce}. Then, in \S \ref{sec:prf}, the alternative proof is obtained with the help of some facts of Bessel potentials and the helpful term ``$(z+\rho)$". Lastly, in the Appendix \S \ref{sec:appen}, we recall some facts of the gamma functions and asymptotic behaviors of modified Bessel functions with imaginary parameter. 

\subsection*{Notation} 
\begin{itemize}
\item $A\lesssim_{y}B$ means that $ A\leq CB$, where $C$ is a constant, which may change from line to line.
To emphasize the dependence of $C$ on certain parameter $y$, we will 
use $A\lesssim_{y} B$.
\item In the multiple integral, $\int_{-1}^1\int_0^{\theta_0}\cdots\int_0^{\theta_{l-1}}$,
we allow the choice of $l=0$ to denote $\int_{-1}^1$, for simplifying notation.
\item $x\wedge y:=\min (x,y)$,
$x\vee y:=\max (x,y)$,
 $\<x\>=\sqrt{1+x^2}$, and
we denote the integer part of $x$ by $[x]$.
\item $\f(f)(\xi)=\int_{-\infty}^{\infty}f(x)e^{-ix\xi}dx$ denotes the Fourier transform .
\end{itemize}

\section{Bessel Potentials and Asymptotic Behaviors}\label{sec:Bessel}
The aim of this section is to give some essential estimates of Bessel potentials. We recall the following well-known formula for Bessel potentials,
with parameter $z\in(-2,-1)$,
\beq\label{defbessel}
\begin{split}
\int_{-\infty}^{\infty} \<\e\>^z e^{-ix\e}d\e
=&\frac{2\pi 2^{(1+z)/2}}{\pi^{\hf}\ga(-z/2)}|x|^{-(1+z)/2}K_{\hf(1+z)}(|x|)\\
=&\frac{2^{2+z/2}\pi^2 e^{-|x|}}{ \ga(-z/2)\ga(1+ z/2)} \int_0^{\infty} e^{-|x|\tau}\left(\hf\tau^2+\tau\right)^{z/2}d\tau,
\end{split}
\eeq
where
$K_\nu$ is the
modified Bessel function of the second kind (\cite[\S 6.3, \S 3.7]{MR1349110})
\beeq\label{eq-Knu}
K_{\nu} (x)=\frac{(\hf \pi )^{\hf} x^{\nu} e^{-x}}{\ga(\nu +\hf)} \int_0^{\infty} e^{-x\tau} (\tau+\hf\tau^2)^{\nu-\hf} d\tau\ ,\ \Re \nu >-\hf, x>0.\eneq
One could directly obtain it by \cite[Chapter \five (26)]{MR0290095} with \cite[Section 6.3]{MR1349110}, or use \cite[(2.10)-(3.6)]{MR143935}.
By analytic continuation, if $x\neq 0$ is fixed, each terms in \eqref{defbessel} are entire functions of $z$, and so the formula is valid for all $z\in\mathbb{C}$, with the fact that $K_{\nu} (|x|)=K_{-\nu} (|x|)$.

By \eqref{defbessel},
we shall denote the Bessel potentials, with parameter $z\in\CC$, by
$F_z(x)$
\begin{equation}\label{eq-Notation-bessel}
F_z(x):=\f (\<\eta\>^{z})(x)=C_z |x|^{-(1+z)/2} K_{\hf(1+z)}(|x|) \ ,\ \forall x\neq 0
\end{equation}
where $C_z=\frac{2\pi2^{ (1+z)/2}}{\sqrt{\pi}\ga(-z/2)}$. With the help of \eqref{gaaa}, it is obvious that
\begin{equation}\label{upperboundofc}
|C_z|\lesssim_{\Re z} \left|\frac{1}{\ga(-\hf z)}\right|\lesssim_{\Re z} e^{ \hf\pi |\Im z|}\ .\eneq
By \eqref{eq-Knu},  a simple computation leads to
\begin{equation}\label{upperboundofk}
|K_{\nu}(x)|\les_{\Re \nu}e^{\pi|\Im\nu|}\times\begin{cases}
x^{-\hf}e^{-x}&x\geq 1\ ,\\
 (\frac{1}{|\Im\nu|}\wedge\ln\frac{1}{x})+1 &  0<x\leq 1,\ \Re\nu=0\ ,\\
x^{-|\Re\nu|}& 0<x\leq 1,\ \Re\nu\neq 0\ ,
\end{cases}
\end{equation}
For the reader's convenience, we give a proof in the appendix \S \ref{sec:appen.2}. Equipped with these estimates, we are able to obtain the following uniform estimates for Bessel potentials

\begin{lemma}\label{lem-2.1}
Let $z\in\CC\backslash \mathbb{R}$, $j\in\mathbb{N}$ and $x\in (0, 1]$, we have the following asymptotic estimates:
\begin{equation}\label{bpdezero}
|F_z^{(j)}(x)|\lesssim_{j,\Re z} e^{2\pi |\Im z|}\times \begin{cases}
x^{-\Re z-j-1}& j> -\Re z-1,\\
(\frac{1}{|\Im z|}\wedge\ln\frac{1}{x})+1 & j=-\Re z-1, \Re z\ \mathrm{odd} ,\\
1 &\mathrm{else}\ .
\end{cases}
\end{equation}
On the other hand, if $x\ge 1$, we have
\begin{equation}\label{bpdeinfty}
|F_z^{(j)}(x)|\lesssim_{j,\Re z} e^{2\pi |\Im z|} x^{-\Re z/2-1} e^{-x},\ x\ge 1 .
\end{equation}
\end{lemma}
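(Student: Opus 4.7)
The plan is to use the explicit Bessel representation from \eqref{eq-Notation-bessel}, $F_z(x)=C_z x^{-(1+z)/2}K_{(1+z)/2}(x)$, together with the bounds \eqref{upperboundofc} on $C_z$ and \eqref{upperboundofk} on $K_\nu$, treating the regimes $x\ge 1$ and $x\in(0,1]$ separately.

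For $x\ge 1$, I differentiate by Leibniz's rule together with the three-term recursion $K_\nu'(x)=-\hf(K_{\nu-1}(x)+K_{\nu+1}(x))$. This writes $F_z^{(j)}(x)$ as a polynomial-in-$z$ linear combination (of degree at most $j$) of terms $x^{-(1+z)/2-k}K_{(1+z)/2+l}(x)$ with $k+|l|\le j$. Applying \eqref{upperboundofk} in the large-$x$ regime gives $|K_{(1+z)/2+l}(x)|\lesssim e^{(\pi/2)|\Im z|}x^{-1/2}e^{-x}$, and since $x^{-(1+\Re z)/2-k}\le x^{-(1+\Re z)/2}$ for $k\ge 0,\ x\ge 1$, the dominant summand (at $k=0$) is bounded by $x^{-\Re z/2-1}e^{-x}$. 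Combined with $|C_z|\lesssim e^{(\pi/2)|\Im z|}$ from \eqref{upperboundofc}, this yields \eqref{bpdeinfty}.

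For $x\in(0,1]$, the naive Leibniz estimate misses cancellations inside the small-$x$ expansion of $K_\nu$. Instead I decompose $F_z$ using $K_\nu(x)=\frac{\pi}{2\sin(\nu\pi)}(I_{-\nu}(x)-I_\nu(x))$ with $\nu=(1+z)/2$, into
\[
F_z(x)=\sum_{k=0}^\infty A_k(z)\,x^{-1-z+2k}+\sum_{k=0}^\infty B_k(z)\,x^{2k},
\]
where both $A_k,\,B_k$ carry the factor $C_z/\sin(\nu\pi)$. The regular series contributes only bounded terms to $F_z^{(j)}$ on $(0,1]$; the singular series' $j$-th derivative is dominated by its $k=0$ term, proportional to $A_0(z)\,x^{-1-z-j}$. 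Three cases arise: (i) if $j>-\Re z-1$ the exponent $-1-\Re z-j$ is negative, giving the bound $x^{-\Re z-1-j}$; (ii) if $j<-\Re z-1$ the exponent is positive and every summand is bounded, giving the "else" case; (iii) if $j=-\Re z-1$ with $\Re z\in\{-1,-3,-5,\dots\}$ (so $j=2m$ for some integer $m\ge 0$), the coefficient $A_0(z)$ diverges like $1/|\Im z|$ via $\sin(\nu\pi)$, but the matching term $B_m(z)x^{2m}$ has an equal and opposite singularity.

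The main obstacle is case (iii). In the borderline $\Re z=-1-2m$, $j=2m$, the $j$-th derivatives of $A_0(z)x^{-1-z}$ and $B_m(z)x^{2m}$ are proportional to $A_0(z)\,x^{-i\Im z}$ and $B_m(z)$ respectively. Using $x^{-i\Im z}=1-i\Im z\,\ln x+O((\Im z)^2)$ together with the asymptotic $A_0(z)\sim c/\Im z\sim -B_m(z)$ as $\Im z\to 0$, the combined expression has magnitude $\lesssim\min(1/|\Im z|,\ln(1/x))+O(1)$, matching \eqref{bpdezero}. The overall prefactor $e^{2\pi|\Im z|}$ absorbs $|C_z|\lesssim e^{(\pi/2)|\Im z|}$, the Bessel bound $e^{(\pi/2)|\Im z|}$, and the polynomial-in-$|\Im z|$ factors coming from gamma functions and Leibniz coefficients.
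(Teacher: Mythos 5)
You take a genuinely different route from the paper: for $x\ge 1$ you use the Leibniz rule together with $K_\nu'=-\frac12(K_{\nu-1}+K_{\nu+1})$, and for $x\in(0,1]$ you expand $K_\nu$ into the two power series $I_{\pm\nu}$, whereas the paper derives from \eqref{derivation} the single recursion $F_z'(x)=-\frac{\Gamma(-z/2-1)}{2\Gamma(-z/2)}\,xF_{z+2}(x)$ (equivalently $\partial_x(x^{-\nu}K_\nu)=-x^{-\nu}K_{\nu+1}$), obtains by induction $F_z^{(j)}(x)=\sum_{l=0}^{[j/2]}C_l\,\frac{\Gamma(-z/2-(j-l))}{\Gamma(-z/2)}\,x^{j-2l}F_{z+2(j-l)}(x)$, and then feeds the $j=0$ case into each term: the weight $x^{j-2l}$ recombines with the $j=0$ bound $x^{-\Re z-2(j-l)-1}$ to give exactly $x^{-\Re z-j-1}$, and the logarithmic borderline is already contained in \eqref{upperboundofk} for $K_{is}$, so no series or cancellation is ever needed. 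Your $x\ge1$ part is fine (the polynomial-in-$z$ Leibniz coefficients are absorbed by $e^{2\pi|\Im z|}$, as you say), just less economical.

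For $x\in(0,1]$, however, your outline has a genuine gap. When $\Re z=-1-2m$ and $\Im z\to0$, the prefactor $1/\sin(\nu\pi)$ makes \emph{every} $A_k(z)$ (all $k\ge0$) and \emph{every} $B_k(z)$ with $k\ge m$ blow up like $1/|\Im z|$, not only $A_0$ and $B_m$ (for $k<m$ the zero of $1/\Gamma(\nu+k+1)$ cancels the pole). So the assertion that ``the regular series contributes only bounded terms'' is false as written, and case (ii) does not follow from ``every summand is bounded'': uniformity in $\Im z$ still needs either the pairwise cancellation $A_k\leftrightarrow B_{m+k}$ carried out for all $k\ge1$ together with a summability estimate in the spirit of Lemma~\ref{thm-series}, or a separate argument such as $\sup_x|F_z^{(j)}(x)|\le\|\eta^j\langle\eta\rangle^{\Re z}\|_{L^1}$ when $\Re z+j<-1$. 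Your treatment of case (iii) for the pair $A_0,B_m$ is correct in outline (the residues at $\nu=-m$ do match, since $\prod_{i=1}^{j}(-i-z)\to(2m)!$, and $x^{-i\Im z}-1$ produces $\min(1/|\Im z|,\ln(1/x))$), but it must be supplemented by the same matching for all $k\ge1$. The paper's proof of Lemma~\ref{lem-2.1} avoids all of this by never opening the power series; the series representation \eqref{series} and Lemma~\ref{thm-series} only enter later, in Section~\ref{sec:prf}.
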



\begin{proof} 
The estimates with $j=0$ follow directly from \eqref{upperboundofc} and \eqref{upperboundofk}. To prove the result with $j>0$, we begin with an observation of the relation between $F_z$ and its derivatives.
For $x>0$, the basic properties of the  Fourier transform imply that,
as $z\notin \R$,
\beeq\label{derivation}
\frac{d}{dx}F_z(x)=
\f (-i\eta\<\eta\>^{z})(x)
=
\f (\frac{\pa_\eta\<\eta\>^{z+2}}{(z+2)i})(x)
=-\frac{\ga(-z/2-1)}{2\ga(-z/2)} x F_{z+2}(x)
\ .
\eneq
It is then easy to see by the chain rule and induction that
\begin{equation}\label{bpderivative}
F_z^{(j)}(x)=\sum_{l=0}^{[ {j}/{2}]}C_l\frac{\ga(-z/2-(j-l))}{\ga(-z/2)}x^{j-2l}F_{z+2(j-l)}(x),
\end{equation}
where $C_l$ are constants from the chain rule.

By \eqref{eq-Notation-bessel} and \eqref{gaaa}, we find that
$$\left|\frac{\ga(-z/2-(j-l))}{\ga(-z/2)}C_{z+2(j-l)}\right|\les_{\Re z}
e^{\pi |\Im z|/2} 
$$
as $z\notin\R$. Then \eqref{bpdeinfty} follows from \eqref{bpderivative},
\eqref{eq-Notation-bessel}
 and \eqref{upperboundofk}.

Turning to the case $x\leq 1$, we will
also use
 \eqref{bpderivative} and \eqref{bpdezero} with $j=0$.
When $j$ is odd, thanks to $j-2l\geq1$,
each term in \eqref{bpderivative} is bounded 
by either
$x^{j-2l-1}\les 1$ ($\Re z+2(j-l)\le -1$) or
$x^{-\Re z-j-1}$ ($\Re z+2(j-l)> -1$), which, in turn, is bounded by
$1\vee x^{-\Re z-j-1}$.
Similarly, if $j$ is even, 
all of the terms in \eqref{bpderivative} are bounded 
by either
$x^{j-2l}\les 1$ ($\Re z+2(j-l)< -1$),
$x^{j-2l}(\frac{1}{|\Im z|}\wedge\ln\frac{1}{x}+1)$ ($\Re z+2(j-l)= -1$)
 or
$x^{-\Re z-j-1}$ ($\Re z+2(j-l)> -1$), which are controlled by
$1\vee x^{-\Re z-j-1}$, except the case when
$j=2l$ and $\Re z+2(j-l)= -1$.
For this exceptional case, $\Re z+j=-1$ and $j\in 2\N$, we have the estimate
$\frac{1}{|\Im z|}\wedge\ln\frac{1}{x}+1$ instead.

In summary, we get  \eqref{bpdezero}, which completes the proof.
\end{proof}

We conclude this section with the series representation of Bessel potentials and  the convergence of related series. For $0<x<\infty$, $\nu=\frac{-1-z}{2}$, by \cite[(7.5.2)-(7.5.3)]{MR2683157}, we have: 
\beq\label{series}
\begin{split}
F_z (x)&=C_z x^{\nu}K_{\nu}(x)=C_z \frac{\pi}{2\sin {\nu \pi}}x^{\nu}[I_{-\nu}(x)-I_{\nu}(x)]\\
&=C_z \frac{\pi}{2\sin {\nu \pi}}x^{\nu} \sum_{\pm, j\ge 0}^{\infty}\mp\frac{1}{j!\ga(\pm \nu+j+1)} \left(\frac x 2\right)^{\pm\nu+2j}\\
&=C_z \frac{\pi}{2\sin {\nu \pi}} [\sum_{j=0}^{\infty}d_j^1 x^{2j}-x^{2\nu}\sum_{j=0}^{\infty} d_j^2 x^{2j}]\\
&\triangleq C_z \frac{\pi}{2\sin {\nu \pi}} [g_z (x^2) +x^{2\nu} h_z(x^2)],
\end{split}
\eeq
where $d_j^1$, $d_j^2$ are defined as follows and stay the same in the sequel
$$
d_j^1=\frac{2^{\nu-2j}}{j!\ga(j+1-\nu)},
\ d_j^2=
\frac{2^{-\nu-2j}}{j!\ga(j+1+\nu)}
\ .$$

\begin{lemma}\label{thm-series}
Let $\nu=\mu +is\in\CC\backslash\R$, $0<r\leq 1$,
$$
g(r)=\sum_{j=1}^{\infty} d_j^1 C_j \sum_{l=0}^{j-1}C_{2j}^{2l+1} r^{l},\ 
h(r)=\sum_{j=0}^{\infty} d_j^2 r^{j},
$$
where $C_{2j}^{2l+1}=\frac{(2j)!}{(2l+1)!(2j-2l-1)!}$, and $|C_j|\leq c^{j}$ ($\forall j$)
for some fixed constant $c>0$. Then for any $p\in\SN$, the following estimates hold:
$$
|\partial_r^{(p)}g(r)|\leq C_{\mu,p} e^{\pi|s|},\  |\partial_r^{(p)}h(r)|\leq C_{\mu,p} e^{\pi|s|}\ .
$$
\end{lemma}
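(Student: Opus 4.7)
\emph{Proof proposal.} The plan is to estimate both series term by term and sum, with the $e^{\pi|s|}$ factor emerging entirely from the reciprocal gamma estimates recalled in the Appendix. First, I observe that the inner sum in $g(r)$ can be bounded uniformly for $r\in (0,1]$: since $C_{2j}^{2l+1}$ are the odd-indexed binomial coefficients of order $2j$, the identity $\sum_{l=0}^{j-1} C_{2j}^{2l+1} = 2^{2j-1}$ gives $|\sum_{l=0}^{j-1} C_{2j}^{2l+1} r^l| \leq 2^{2j-1}$. After $p$ differentiations in $r$, each surviving term picks up a factor $l!/(l-p)! \leq l^p \leq (2j)^p$ (using $r \leq 1$), so the differentiated inner sum is dominated by $(2j)^p \cdot 2^{2j-1}$; combined with the hypothesis $|C_j|\leq c^j$, the $j$-th summand of $\partial_r^p g$ is controlled by $|d_j^1|\,(4c)^j (2j)^p/2$.

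Next, I would estimate the coefficients. By definition $|d_j^1| = 2^{\mu-2j}/(j!\,|\ga(j+1-\nu)|)$ and analogously $|d_j^2| = 2^{-\mu-2j}/(j!\,|\ga(j+1+\nu)|)$. Invoking the Appendix estimate \eqref{gaaa}, together with the reflection formula $\ga(w)\ga(1-w) = \pi/\sin(\pi w)$ and the elementary bound $|\sin(\pi w)| \lesssim e^{\pi|\Im w|}$, one obtains
\[
\frac{1}{|\ga(j+1\pm\nu)|} \lesssim_\mu \frac{e^{\pi|s|/2}}{\ga(j+1\pm\mu)} \quad \text{for } j \geq j_0(\mu),
\]
while for the finitely many $j < j_0(\mu)$ (where the real part $j+1\pm\mu$ may be non-positive) each reciprocal is directly bounded by $C_\mu\, e^{\pi|s|/2}$ using the reflection formula. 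Putting these ingredients together,
\[
|\partial_r^p g(r)| \lesssim_{\mu,p} e^{\pi|s|/2} \sum_{j\geq 1} \frac{(4c)^j\,(2j)^p}{j!\,\ga(j+1-\mu)},\qquad |\partial_r^p h(r)| \lesssim_{\mu,p} e^{\pi|s|/2} \sum_{j\geq p} \frac{j^p}{j!\,\ga(j+1+\mu)},
\]
after absorbing harmless prefactors $2^{\pm\mu-2j}\leq C_\mu$. Both series converge by the ratio test (the denominators grow like $(j!)^2$), and their values are constants depending only on $\mu$, $p$, and $c$.

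The main technical obstacle is the uniform handling of the gamma-reciprocal bound across all $j$: one must split into a small-$j$ range, where $j+1\pm\mu$ can be non-positive so a direct Stirling bound fails, and a large-$j$ range, where the vertical-line estimate from \eqref{gaaa} applies. The reflection formula absorbs the small-$j$ terms into finitely many explicit quantities each bounded by $C_\mu e^{\pi|s|/2}$, which fold into the constant $C_{\mu,p}$. Once this finite-$j$ subtlety is resolved, the rest is routine: the polynomial factor $j^p$ from the derivatives is absorbed by factorial decay, yielding the claimed bound (in fact with the sharper exponent $e^{\pi|s|/2}$).
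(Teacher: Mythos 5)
Your proposal follows the same basic strategy as the paper's proof: bound the inner combinatorial sum by $2^{2j-1}$, use the $1/j!$ present in $d_j^1,\,d_j^2$ for factorial decay, and establish a uniform-in-$j$ bound on $|1/\ga(j+1\pm\nu)|$. The only real divergence is how that uniform gamma bound is obtained: the paper iterates $\ga(x+1)=x\ga(x)$ to pin the real part to a fixed value (after which \eqref{gaaa} applies with a $j$-independent constant), whereas you invoke the reflection formula; both routes work, and you correctly note that finitely many small $j$ need separate treatment. Two small caveats. First, your parenthetical claim of the sharper exponent $e^{\pi|s|/2}$ does not follow from the ingredients you cite: combining the reflection formula with $|\sin(\pi w)|\lesssim e^{\pi|\Im w|}$ and $|\ga(x+iy)|\leq|\ga(x)|$ only yields $|1/\ga(j+1\pm\nu)|\lesssim_\mu e^{\pi|s|}/\ga(j+1\pm\mu)$, not $e^{\pi|s|/2}$; the extra halving is true but requires the vertical-line decay of the gamma function, e.g.\ via $\ga(x)/|\ga(x+iy)|=\prod_{k\geq0}(1+y^2/(x+k)^2)^{1/2}\lesssim e^{\pi|y|/2}$ for $x\geq1$. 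Since the lemma only asserts $e^{\pi|s|}$, this is harmless, but the sharpness should not be advertised without that input. Second, when you ``absorb'' the prefactor $2^{\pm\mu-2j}$ into $C_\mu$ you discard the $2^{-2j}$ that is meant to cancel the $2^{2j-1}$ from the inner-sum bound, leaving the spurious $(4c)^j$; the series still converges because $j!\,\ga(j+1\pm\mu)$ grows like $(j!)^2$, so this is only an inelegance, but the paper's cancellation (yielding $1/(j-p)!$ directly) is cleaner.
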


\begin{proof}
At
first, a simple calculation leads to 
\begin{equation*}
\sum_{l=0}^{j-1}  C_{2j}^{2l+1}=C_{2j}^1+C_{2j}^3+\cdots+C_{2j}^{2j-1}=\frac{(1+1)^{2j}-(1-1)^{2j}}{2}=2^{2j-1}.
\end{equation*}
Applying the fact $\ga(x+1)=x\ga(x)$ to $\ga(\pm\nu+j+1)$ for $j>j_0(\mu)$, ($j_0(\mu)$ is a constant such that $-\mu+j_0(\mu)=1$), we have the following estimate, uniformly with respect to $j$, 
\begin{equation*}
\left|\frac{1}{\ga(\pm\nu+j+1)}\right|\lesssim_{\mu} e^{\pi |s|}\ \ (\forall j\geq1),
\end{equation*}
in view of \eqref{gaaa}.

Given these preparations, we have
$$\partial_r^{(p)} g(r)=\sum_{j=1+p}^{\infty} d_j^1 C_j \sum_{l=p}^{j-1}C_{2j}^{2l+1} \frac{l!}{(l-p)!}r^{l-p},$$
and so
\begin{align*}
|\partial_r^{(p)} g(r)|&\leq \sum_{j=1+p}^{\infty} \left|
\frac{2^{\nu-2j}}{j!\ga(j+1-\nu)}
\right|c^j \sum_{l=p}^{j-1}C_{2j}^{2l+1} (l)(l-1)\cdots (l-p+1)\\
&\leq \sum_{j=1+p}^{\infty} \frac{1}{(j-p)!}\left|\frac{1}{\ga(j+1-\nu)} \right|2^{\mu}c^j\\
&\lesssim_{\mu,p} e^{\pi|s|}.
\end{align*}
A similar and even simpler argument
applies also to $h$ and this concludes the proof.
\end{proof}

\section{Wave kernel}\label{sec:wave}

Recall from Taylor \cite[section 8.5]{MR2743652} that,
if
$r=d_g(x,y)$ is the hyperbolic distance,
 $m(\tau)$ is an even function of $\tau$,
 and $\hat{m}$ is the Fourier transform of $m$,
then the kernel of the operator $m(D_0)$ is given by 
\begin{equation*}
(2\pi)^{-1} \left(-\frac{1}{2\pi \sinh r}\partial_r\right)^k \hat{m}(r)\ ,
\end{equation*}
when $n=2k+1$, and
\begin{equation*}
2^{-\hf}\pi^{-1}\int_r^{\infty}\frac{\sinh u}{(\cosh u-\cosh r)^{\hf}}\left(-\frac{1}{2\pi \sinh u}\partial_u\right)^k \hat{m}(u)du\ ,
\end{equation*}
when $n=2k$.
Thus, the kernels of the operators $S_z(t)$ and $C_z(t)$ with $z=-\rho+is$, given in \eqref{eq-Stein},
are multiples of the following
$$
(z+\rho)e^{z^2}\times\begin{cases}
(\shr \partial_r)^{k-1} (\frac{-i}{\sinh r} \int (\be^2 + \e^2)^{\hf z} \frac{\sin{t\e}}{\e}\e e^{-ir\e} d\e ),\\
(\shr \partial_r)^{k} (\int (\be^2 + \e^2)^{\hf (z-1)} \cos{t\e} e^{-ir\e} d\e ),
\end{cases}
$$
when $n=2k+1$, and
$$
(z+\rho)e^{z^2}\int_r^{\infty}\frac{\sinh u}{(\cosh u-\cosh r)^{\hf}}\times \begin{cases}
(\shu \partial_u)^{k-1} (\frac{-i}{\sinh u} \int (\be^2 + \e^2)^{\hf z} \frac{\sin{t\e}}{\e}\e e^{-iu\e} d\e )du,\\
(\shu \partial_u)^{k} (\int (\be^2 + \e^2)^{\hf (z-1)} \cos{t\e} e^{-iu\e} d\e )du.
\end{cases}
$$
when $n=2k$.

With the change of variable $\eta\to \beta\eta$, 
the proof of 
Theorem \ref{thm-main} is reduced to
the following
\begin{prop}\label{thm-reduce}
Let 
$n\ge 2$, $\rho=(n-1)/2$, $k=[n/2]\ge 1$, $t>0$,
$z=-\rho+is$ with  $s\in\R$,
 $\be\geq \rho$ for $n=3$ and $\be>\rho$ for $n \neq 3$.
Then
the following four functions 
\beeq
\label{prob} \Big|se^{-s^2}  (\shr \partial_r)^{k-1} (\shr \int_{-\infty}^{\infty} \<\e\>^{-k+is} \sin \be t\e \ e^{-i\be r \e}d\e)\Big|,\eneq
\beeq\label{probb}\Big|se^{-s^2}  (\shr \partial_r)^{k} \int_{-\infty}^{\infty}
\<\e\>^{-k-1+is}  \cos{\be t\e} \ e^{-i\be r\e} d\e \Big|,\eneq
\beeq
\label{probbb} \Big| \int_r^{\infty}\frac{se^{-s^2}\sinh u}{(\cosh u-\cosh r)^{\hf}}(\shu \partial_u)^{k-1} \frac{ \int 
\<\e\>^{1/2-k+is}  \sin \be t\e \ e^{-i\be u \e} d\e}{\sinh u} du\Big|\ ,\eneq
\beeq\label{probbbb}\Big|  \int_r^{\infty}\frac{se^{-s^2}\sinh u}{(\cosh u-\cosh r)^{\hf}}(\shu \partial_u)^{k} 
\int\<\e\>^{-k-1/2+is} 
\cos{\be t\e}\ e^{-i\be u\e} d\e )du\Big|\ ,\eneq
are 
 uniformly (with respect to $r\geq0, s\in\R$) bounded by
 $(\sinh t)^{-\rho}$.
\end{prop}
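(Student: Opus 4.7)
The plan is to reduce each of \eqref{prob}--\eqref{probbbb} to sums of derivatives of the Bessel potentials $F_z$ evaluated at $\be|t\pm r|$, and then apply Lemma \ref{lem-2.1}, with the Gaussian factor $se^{-s^2}$ absorbing the exponential-in-$|s|$ growth. As a first step, write
\[
\sin(\be t\eta)e^{-i\be r\eta}=\tfrac{1}{2i}\bigl(e^{i\be(t-r)\eta}-e^{-i\be(t+r)\eta}\bigr),\ \cos(\be t\eta)e^{-i\be r\eta}=\tfrac{1}{2}\bigl(e^{i\be(t-r)\eta}+e^{-i\be(t+r)\eta}\bigr);
\]
the evenness of $F_z$ (clear from \eqref{eq-Notation-bessel}) then collapses the inner $\eta$-integral to $\tfrac{1}{2i}[F_z(\be|t-r|)-F_z(\be(t+r))]$ or $\tfrac{1}{2}[F_z(\be|t-r|)+F_z(\be(t+r))]$. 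A quick check of the four cases shows $\Re z\in\{-\rho,-\rho-1\}$ throughout.

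Next, expand the operator $(\shr\pa_r)^m$. An induction on $m$ yields, for any smooth $f$,
\[
(\shr\pa_r)^m f(r)=\sum_{j=1}^{m}\frac{Q_{m,j}(\cosh r)}{\sinh^{2m-j}r}\,f^{(j)}(r),
\]
with $Q_{m,j}$ universal polynomials. Substituting $f(r)=F_z(\be(r\pm t))$ and using the chain rule, the expressions in \eqref{prob} and \eqref{probb} become finite sums of
\[
\frac{s e^{-s^2}\,\be^{j}\,Q_{m,j}(\cosh r)}{\sinh^{2m-j}r}\,F_z^{(j)}(\be|t\pm r|),\qquad m\in\{k-1,k\},\ 1\le j\le m.
\]

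To bound each summand, invoke Lemma \ref{lem-2.1}: since $\Re z\in\{-\rho,-\rho-1\}$, one gets $|F_z^{(j)}(x)|\les e^{2\pi|s|}x^{-\Re z/2-1}e^{-x}$ for $x=\be|t\pm r|\ge 1$, and the appropriate small-$x$ bound otherwise. The Gaussian $|s|e^{-s^2}$ dominates $e^{2\pi|s|}$ uniformly; in the logarithmic borderline $j=-\Re z-1$ the product $s\cdot(\tfrac{1}{|s|}\wedge\ln\tfrac{1}{x})$ is still bounded, so the factor $(z+\rho)=is$ in the Stein family exactly absorbs the log-singularity of $F_z$ at the origin. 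For small $r$ the singular factor $\sinh^{-(2m-j)}r$ is paired with the small-argument growth of $F_z^{(j)}$; for large $t$ or large $r$ the exponential decay $e^{-\be|t\pm r|}$ is matched against the target $(\sinh t)^{-\rho}\sim e^{-\rho t}$, which is precisely why the hypothesis $\be\ge\rho$ is needed.

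For \eqref{probbb}--\eqref{probbbb}, after applying the above expansion to the $u$-integrand, one still faces the singular outer integral $\int_r^{\infty}\sinh u\,(\cosh u-\cosh r)^{-1/2}(\cdots)\,du$; the substitution $v=\cosh u-\cosh r$ removes the singularity at $u=r$, reducing matters to the estimates of Step 3 integrated against a smooth weight. The main obstacle throughout is the case analysis required to match the target $(\sinh t)^{-\rho}$: one must partition the $(r,t)$-plane into regimes (small vs.\ large $t$, small vs.\ large $r$, near-front $|t-r|\ll 1$ vs.\ off-front), each with its own dominant term from Lemma \ref{lem-2.1}. Equality $\be=\rho$ survives only when $n=3$ (where $k=1$ makes $(\shr\pa_r)^{k-1}$ trivial and the number of borderline terms minimal); for other dimensions the accumulation of borderline contributions from the iterated $\shr\pa_r$ derivatives forces the strict inequality $\be>\rho$ in order to close all the pointwise estimates.
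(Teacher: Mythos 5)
Your opening reduction (evenness of $F_z$, $\Re z\in\{-\rho,-\rho-1\}$) and the closing remarks about $\be\ge\rho$ are on the right track, but the core of the plan -- expand $(\shr\pa_r)^m$ into $\sum_j Q_{m,j}(\cosh r)\sinh^{-(2m-j)}r\,F_z^{(j)}(\be|t\pm r|)$ and then estimate each term via Lemma~\ref{lem-2.1} -- cannot close on its own. The reason is that in the regime $0\le r<1$, $r\ge t/2$ (so $r$ and $t$ are comparable and both small) the expansion produces a factor $\sinh^{-(2m-j+1)}r\sim r^{-(2k-1-j)}$ from the iterated $\shr$, while Lemma~\ref{lem-2.1} only tells you $|F_z^{(j)}(\be|t\pm r|)|\les 1$ for $j\le -\Re z-1$. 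The worst term is therefore $\sim r^{-(2k-1)}\sim t^{-(2k-1)}$, which is far larger than the target $t^{-\rho}$ as soon as $k\ge2$. No refinement of the one-variable bounds in Lemma~\ref{lem-2.1} fixes this: each $F_z^{(j)}$ individually is bounded near the origin, yet the sum diverges. What saves the estimate is the \emph{cancellation between} $F_z(\be|t-r|)$ and $F_z(\be(t+r))$, which your plan never uses.

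The paper exploits that cancellation in two ways. First, away from the light cone ($r\le t/2$) it writes $F_z(\be(t-r))-F_z(\be(t+r))=-\be r\int_{-1}^{1}F_z'(\be(t+\theta r))\,d\theta$ and iterates (this is the structure \eqref{specialg}--\eqref{specialgg}): each application of $\pa_\tau=\shr\pa_r$ pulls out another factor of $r$ from the integral representation, so the $r^{-1}$'s from $\shr$ are cancelled \emph{before} Lemma~\ref{lem-2.1} is invoked, leaving $t^{-k}$ cleanly. Second, near the light cone ($r\ge t/2$, $r,t$ small) the paper abandons the asymptotics of $F_z$ entirely, because $|t-r|$ can be arbitrarily small and the FTC representation would then see the singularity of $F_z'$. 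Instead it uses the series decomposition \eqref{series}, $F_z(x)=c\,[g_z(x^2)+x^{2\nu}h_z(x^2)]$: the even-analytic piece $g_z$ gives cancellation structurally (the odd-in-$r$ combination factors out an $r$), while the singular piece $x^{2\nu}h_z(x^2)$ is handled by integration by parts in the auxiliary variable $\theta$ with the oscillatory weight $(t+\theta r)^{-is}$, which uses $\Im z\ne0$ in an essential way. Your proposal mentions neither device, and without them the argument fails in exactly the dimension $k\ge2$ where it matters.

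Two smaller points. The claim that the substitution $v=\cosh u-\cosh r$ "removes the singularity" in \eqref{probbb}--\eqref{probbbb} is optimistic: it does kill the $\sqrt{\cdot}$ at $u=r$, but the hard work in the even-dimensional case is the interaction between the endpoint singularity at $u=r$ and the wave-front singularity at $u=t$, which the paper again handles by integration by parts against $(u-t)^{-is}$ after isolating the $l=k-1$ term and using the homogeneous (Riesz) structure of $\f(|\e|^{-k+\hf+is})$. Finally, your diagnosis for why $\be=\rho$ survives only at $n=3$ is not quite right: it is not an "accumulation of borderline terms" in the small-$r$ regime but the large-$t$ estimate $|F_z^{(j)}(\be\la)|\les|\la|^{-\Re z/2-1}e^{-\be|\la|}$ from \eqref{bpdeinfty}, where the polynomial prefactor $|\la|^{-\Re z/2-1}$ must be absorbed by $e^{(\be-\rho)|\la|}$; that requires strict inequality unless the exponent $-\Re z/2-1$ is nonpositive, which happens only for $k=1$ (and, in the sine case, $k\le2$).
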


Before presenting the proof of
Proposition \ref{thm-reduce}, we 
do some preparations concerning
the operator
$\shr\partial_r$, as well as 
its action on functions. 
Let
$\tau=\cosh r-1$ be a smooth change of variable, we see that
$\partial_{\tau}=\shr\partial_r$.
As $\frac{\sinh r}{r}$ is a smooth function of $r^2$ and $r^2$ is equivalent to $\tau$ near the origin ($r^2$ is a smooth function of $\tau$ for $\tau\ge 0$), we see that
if $f(\tau)\triangleq\frac{r}{\sinh r}$, then
\begin{equation}\label{fderiv}
\partial_{\tau}^p f(\tau)=(\shr\partial_r)^p\frac{r}{\sinh r}=\mathcal{O}_p(1)\ , \ r\les 1,\ p\in\SN\ .
\end{equation}
In addition, a simple computation deduces that 
\begin{equation}\label{sinhr}
\left|\partial_{\tau}^{p}(\shr)\right|=\left|\frac{\sum_{l=0}^{[\hf p]}C_{p,l} \cosh^{p-2l}r\sinh^{2l}r}{(\sinh r)^{2p+1}}\right|\lesssim_p \begin{cases}
r^{-(2p+1)},& r\les 1\ ,\\
(\sinh r)^{-p-1}, & r\gtrsim 1\ .
\end{cases}
\end{equation}
In fact, we mainly use \eqref{fderiv} and \eqref{sinhr} to absorb or even eliminate the influence of ``$r^{-1}$" in the following situations. 

When it comes to the general function $g(r)\in C^{\infty}$, we have
\beq\label{onlyg}
\begin{split}
\left|\partial_{\tau}^p g\right|&\lesssim_p \sum_{j=1}^{p} \sum_{\sum \al_k=p-j} |\partial_r^jg(r)|\times\prod_{k=1}^p\left|\partial_r^{\al_k} (\sinh r)^{-1}\right|  \\
&\lesssim_p\begin{cases}
\sum_{j=1}^{p}r^{-(2p-j)}|\partial_r^jg(r)|& r\les 1\ ,\\
(\sinh r)^{-p}\sum_{j=1}^p|\partial_r^jg|&r\gtrsim 1\ .
\end{cases}
\end{split}
\eeq 
A direct application of \eqref{onlyg} shows 
\beq\label{generalfunc}
\begin{split}
\left|\partial_{\tau}^p(\shr g)\right|&=\left|\sum_{l=0}^pC_l\partial_{\tau}^{p-l}(\shr)\partial_{\tau}^l(g)\right|\\
&\lesssim_p\begin{cases}
\sum_{j=0}^p r^{-(2p-j+1)}|\partial_r^jg|& r\les 1\ ,\\
(\sinh r)^{-p-1}\sum_{j=0}^p|\partial_r^jg|&r\gtrsim 1\ ,
\end{cases}
\end{split}
\eeq
where $C_l$ are constants from chain rule and by induction.
When ``$\shr$" is replaced by the well-behaved ``$f(\tau)$" for  {$r\les 1$}, a better estimate is available
\beq\label{fgderiv}
\begin{split}
\left|\partial_{\tau}^p (f(\tau)g(r))\right|&=\left|\sum_{l=0}^p C_l \partial_{\tau}^{p-l}f\partial_{\tau}^lg(r)\right|\\
&\lesssim_{p}\sum_{l=1}^p \left|(\shr\partial_{r})^l g(r)\right|+|g(r)|\\
&\lesssim_p \sum_{j=1}^{p} r^{-(2p-j)} |\partial_r^j g(r)|+|g(r)|.
\end{split}
\eeq

In particular,  when $g_l$ ($l\in\mathbb{N}$) has the following special structure, for $r\les 1$,
\begin{equation}\label{specialg}
\partial_rg_l(r)=rg_{l+1}(r)=\sinh r f(\tau)g_{l+1}(r),
\end{equation}
that is, $\partial_{\tau}g_l(r)=f(\tau)g_{l+1}(r)$,
then we have 
\beq\label{specialgg}
\begin{split}
|\partial_{\tau}^l(f(\tau)g_0(r))|&\lesssim_l \sum_{j=1}^{l}|\partial_{\tau}^j g_0(r)|+|g_0(r)|\\
&\lesssim_l\sum_{j=0}^{l-1}|\partial_{\tau}^j(f(\tau)g_1(r))|+|g_0(r)|\\
&\lesssim_l \sum_{j=1}^{l-1}|\partial_{\tau}^jg_1(r)|+|g_0(r)|+|g_1(r)|\\
&\lesssim_l \sum_{j=0}^l|g_j(r)|.
\end{split}
\eeq
For example, $g(r^2)$ and \eqref{integralderiv} have such structure in \eqref{specialg}.


\section{Proof of Proposition \ref{thm-reduce}}\label{sec:prf}
In this section, we give the proof of Proposition \ref{thm-reduce}.

\subsection{Consider \eqref{prob}}\label{oddsin}
\subsubsection{\bm{$r\geq1$}}\label{oddsinbigr}

By \eqref{generalfunc} with the case $r\ge 1$, we have the following estimate
\begin{align*}
\eqref{prob}&\lesssim_{\be} \sum_{m=0}^{k-1}(\sinh r )^{-k}\left|se^{-s^2}  \partial_{r}^m\int \<\e\>^{-k+is} \sin \be t\e e^{-i\be r \e}d\e\right|\\
&\les \sum_{m=0}^{k-1}\sum_{\pm} |s e^{-kr-s^2} F_{-k+is}^{(m)}(\be (r\pm t))|
\ .
\end{align*}

Concerning $F_{-k+is}^{(m)}$, we have the 
following consequence of Lemma \ref{lem-2.1}
\begin{lemma}\label{lem-2.3}
Let $0\leq m\leq k-1$, $\be\geq k$ for $k=1,2$ and $\be>k$ for $k\geq3$,
Then there exists a constant $C$,  independent of  $s\in\R$, $\la$, such that
\begin{equation*}
|s e^{-s^2}F_{-k+is}^{(m)}(\be \la)|
\leq Ce^{-k|\la|}. 
\end{equation*}
\end{lemma}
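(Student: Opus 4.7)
The plan is to read off estimates for $F_{-k+is}^{(m)}$ directly from Lemma \ref{lem-2.1} and then verify that the Gaussian prefactor $s e^{-s^2}$ both kills the exponential constant $e^{2\pi|s|}$ produced there and, in the exceptional logarithmic case, tames the additional $1/|s|$ factor. With $\Re z = -k$ and $0 \le m \le k-1$, the threshold $j > -\Re z - 1 = k-1$ in \eqref{bpdezero} is never triggered, so the Bessel potential always lands in the bounded (``else'') line, except when $m = k-1$ and $k$ is odd, where an extra $\frac{1}{|s|} \wedge \ln(1/x)$ term appears.

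The $s$-uniformity comes from completing the square: $|s| e^{-s^2 + 2\pi|s|} \le |s| e^{-(|s|-\pi)^2 + \pi^2}$ is bounded in $s$, and in the exceptional line one uses $|s| \cdot (\frac{1}{|s|} \wedge \ln\frac{1}{x} + 1) \les 1 + |s|$, which is again absorbed by the Gaussian. Combining with \eqref{bpdeinfty} for large argument, the plan yields the uniform-in-$s$ estimate
\[
|s e^{-s^2} F_{-k+is}^{(m)}(x)| \les
\begin{cases} 1, & 0 < x \le 1,\\ x^{k/2-1} e^{-x}, & x \ge 1. \end{cases}
\]

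Finally, the proof splits on the size of $\be|\la|$. When $\be|\la| \le 1$, $|\la|$ is bounded and $e^{-k|\la|}$ is bounded below, so the uniform bound $\les 1$ already suffices. When $\be|\la| \ge 1$, the desired inequality reduces to the boundedness of $(\be|\la|)^{k/2-1} e^{-(\be-k)|\la|}$ on $\be|\la| \ge 1$. For $k \in \{1, 2\}$ the polynomial factor is $\le 1$ there, so $\be \ge k$ suffices; for $k \ge 3$ it grows, and strict inequality $\be > k$ is needed to kill it with the exponential. This dichotomy is precisely the source of the hypothesis $\be \ge k$ vs.\ $\be > k$ in the statement. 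The main (and really only) subtle point is tracking these endpoints, since $k/2 - 1 \le 0$ holds exactly for $k \in \{1,2\}$; the remainder is routine bookkeeping against Lemma \ref{lem-2.1}.
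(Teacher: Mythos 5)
Your proposal is correct and follows essentially the same route as the paper: both split at $\be|\la|=1$, read the bounds off Lemma \ref{lem-2.1}, use $|s|e^{-s^2+2\pi|s|}\les 1$ together with $|s|\big(\frac{1}{|s|}\wedge\ln\frac{1}{x}\big)\le 1$ for the exceptional case $m=k-1$ with $k$ odd, and track the sign of the exponent $k/2-1$ to see exactly where $\be\ge k$ versus $\be>k$ is required. The one point you omit, which the paper treats explicitly, is $m=k-1$ at $\la=0$: there $(-i\eta)^{k-1}\<\eta\>^{-k+is}$ is not integrable, so $F_{-k+is}^{(k-1)}$ at the origin is not a priori controlled by the Fourier-inversion argument, and the paper resolves this by splitting $\<\eta\>^{-k+is}$ into the homogeneous distribution $|\eta|^{-k+is}$ plus better-behaved terms and using the explicit Riesz potential $c|\la|^{k-1-is}$. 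Since your estimates cover all $\la\neq 0$ and the single exceptional point is immaterial for the $L^1\to L^\infty$ application, this is a cosmetic rather than substantive omission.
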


\begin{proof}
For the case of $|\la|\ge 1$,
\eqref{bpdeinfty} in  Lemma \ref{lem-2.1} 
gives us 
$$|s e^{-s^2} F_{-k+is}^{(m)}(\be\la)|\les_m
|s| e^{2\pi |s|-s^2} |\be\la|^{k/2-1} e^{-|\be\la|}$$
 which could be controlled by $e^{-k|\la|}$, thanks to our assumption on the relation between $\be$ and $k$.

On the other hand, if $0<|\la|\le 1$, as we are assuming $m\le k-1$, 
\eqref{bpdezero} in  Lemma \ref{lem-2.1} tells us that
$$|s  e^{-s^2} F_{-k+is}^{(m)}(\be\la)|\les_m
|s| e^{2\pi |s|-s^2} 
\left(1+\frac{1}{|s|}\right)\les 1\ .
$$

As $\f^{-1}(F_{-k+is}^{(m)})(\eta)=(2\pi)^{-1}(-i\eta)^m\<\eta\>^{-k+is}$, which is integrable for $m< k-1$, the only remaining case is 
$m=k-1$ near $\la=0$.
To treat this situation, we recall
 the corresponding Riesz potential
$$R_{-k+is}=\f(|\e|^{-k+is})(\la)
=2^{-k+is}\pi^{1/2}\frac{\ga((-k+1+is)/2)}{\ga((k-is)/2)}|\la|^{k-1-is}
$$
where $|\e|^{-k+is}\in C^\infty(\R\backslash \{0\})\cap \mathcal{S}'(\R)$ is a homogeneous distribution of degree $-k+is$, see, e.g., \cite[\S 3.8 (8.32-8.36)]{MR1395148}.

The relation between the Riesz potential and the Bessel potential naturally yields the desired result, in view of the uncertainty principle.
To be specific, let us give the proof.
It is clear that
$R_{-k+is}$ is  in $C^{k-1}$ for $s\neq 0$.
Let
$\phi\in C_c^\infty(\R)$ which is identity near $0$, and $\psi=1-\phi$,
then we have
\begin{eqnarray*}
\<\e\>^{-k+is} & = & |\e|^{-k+is}
-\phi
|\e|^{-k+is}
+\phi  \<\e\>^{-k+is}+\psi (\<\e\>^{-k+is}-|\e|^{-k+is}) \\
 & = &  |\e|^{-k+is}
-\phi
|\e|^{-k+is}+\<\eta\>^{1-k}L^1\ ,
\end{eqnarray*}
and so, modulo a $C^{k-1}$ function,
$$F_{-k+is}=\f(\phi  |\e|^{-k+is})=c \f(\phi)*R_{-k+is}\in C^{k-1}\ .$$
This completes the proof.
\end{proof}
Equipped with  Lemma \ref{lem-2.3}, we could proceed with the following 
\begin{align*}
\eqref{prob}
&\les \sum_{m=0}^{k-1}\sum_{\pm} |s e^{-kr-s^2} F_{-k+is}^{(m)}(\be (r\pm t))|
\\
&\lesssim_{k} e^{-kr} (e^{-k|r-t|}+e^{-k|r+t|}) \les e^{-kt}\lesssim (\sinh t)^{-k}\ ,
\end{align*}
which give us the desired result.

\subsubsection{\bm{$0\leq r<1,r\leq\hf t$}}\label{oddsecond}

To begin the proof, we observe that  $\eqref{prob}$ could be written as
\begin{equation*}
\begin{split}
\eqref{prob}&= \left|\hf se^{-s^2}(\shr \partial_r)^{k-1} (\shr [F_z(\be(t-r))-F_z(\be(t+r))])\right|\\
&=\left|\hf\be se^{-s^2}(\shr \partial_r)^{k-1} (\frac{r}{\sinh r} \int_{-1}^1 F_z^{\prime}(\be(t+\theta r)) d\theta)\right|\ ,
\end{split}
\end{equation*}
where $z=-k+is$.
Observing that
\begin{eqnarray}
\partial_r \int_{-1}^1 F_z^{\prime}(\be(t+\theta r)) d\theta & = & 
\be \int_{-1}^1 (F_z^{\prime \prime} (\be(t+\theta r))-F_z^{\prime \prime}(\be t))\theta d\theta  \label{integralderiv}\\
 & = & r \be^2 \int_{-1}^1 \int_0^{\theta} F_z^{(3)}(\be(t+\theta_1 r))d\theta_1 \theta d\theta, \nonumber
\end{eqnarray}
which
has the same structure as $\eqref{specialg}$.
With the help of \eqref{integralderiv},
  by \eqref{specialgg}, we can then control  $\eqref{prob}$ as follows
\begin{equation*}
\eqref{prob}\lesssim_{\be,k}\sum_{l=0}^{k-1}\left|se^{-s^2}\int_{-1}^1\int_0^{\theta_0}\cdots\int_0^{\theta_{l-1}} F_z^{(2l+1)}(\be(t+\theta_l r)) d\theta_l \theta_{l-1} d\theta_{l-1}\cdots \theta_0 d\theta_0\right|.
\end{equation*}

Then, it is easy to employ Lemma \ref{lem-2.1} to obtain when $t\geq 2$:
\begin{align*}
\eqref{prob}\lesssim_{\be,k} (t-r)^{k/2-1} e^{-\be (t-r)}\lesssim_{\be,k}e^{-kt}\sim_k (\sinh t)^{-k},
\end{align*}
and when $0<t<2$:
\begin{align*}
\eqref{prob}\lesssim_{\be,k} (t-r)^{k-(2k-1)-1}\lesssim_k t^{-k}\sim_k (\sinh t)^{-k}.
\end{align*}
In particular, we observe that when $k=1,2$, $\be$ can be  $k$, while for $k>2$, $\be>k$ is required.

\subsubsection{\bm{$0\leq r<1,r\geq\hf t$}}\label{oddsmall}

In this case, the asymptotic behavior of the Bessel potential does not seem to be helpful, due to the troublesome factor ``$t-r$" when $k\ge 2$. 
Instead, we try to use an alternative formula of $F_z(\be|t-r|)-F_z(\be(t+r))$ from the perspective of series expansion,

By \eqref{series}, we can split ``$F_z$" into two pieces involving  $g_z$ and  $h_z$, as follows
\begin{align*}
\eqref{prob}=&\left|\hf se^{-s^2}(\shr \partial_r)^{k-1} (\shr [F_z(\be|t-r|)-F_z(\be(t+r))])\right|\\
=&\Big|  C_z \frac{\pi}{4\sin {\nu \pi}} se^{-s^2}(\shr \partial_r)^{k-1}\Big\{\shr \Big[(g_z ((\be(t-r))^2)-g_z((\be(t+r))^2))\\
&+((\be|t-r|)^{2\nu}h_z((\be(t-r))^2)-(\be(t+r))^{2\nu}h_z((\be(t+r))^2))\Big]\Big\}\Big|\\
\triangleq&|\one+\two|\ ,
\end{align*}
where $\nu=\frac{-1-z}{2}=\frac{k-1-is}2$. 
Notice that,
when $k=1$,  
$sF_z$ are bounded near $0$,
by
 Lemma \ref{lem-2.3},
 which gives us
$$
 \eqref{prob}\les \shr\les r^{-1}\les t^{-1}\simeq (\sinh t)^{-k}
 \ , 
 k=1\ .$$
In the following, we assume $k\ge 2$.

Clearly, the part involving $g_z$ can form in terms of \eqref{specialg}
\begin{align*}
|\one|&=|C_z \frac{\pi s}{4\sin {\nu \pi}} e^{-s^2}(\shr \partial_r)^{k-1}(\shr [g_z ((\be(t-r))^2)-g_z((\be(t+r))^2)])|\\
&=|C_z \frac{\pi s}{2\sin {\nu \pi}}  e^{-s^2}(\shr \partial_r)^{k-1}(\frac{r}{\sinh r}\sum_{j=1}^{\infty} d_j^1 \be^{2j} \sum_{l=0}^{j-1}C_{2j}^{2l+1} r^{2l}t^{2j-2l-1})|\\
&\triangleq |C_z \frac{\pi s}{2\sin {\nu \pi}}  e^{-s^2}(\shr \partial_r)^{k-1}(\frac{r}{\sinh r}g(r^2))|.
\end{align*}
With the help of \eqref{specialgg}
and \eqref{upperboundofc},
 the proof of Lemma \ref{thm-series} with $\nu=\hf(k-1-is)$
gives us
$$|\one|\les_k
\left|\frac{ s}{ \sin {\nu \pi}} \right|  e^{2\pi|s|-s^2}\les 1\lesssim t^{-k}\ ,$$
where the factor $s$ is used to absorb the possible 
bad term appeared in $|\sin \nu\pi|$ when $k$ is odd and $s$ is small.

Heuristically, the term $\two$ behaves better and is expected easier to control. However,
it turns out that this term is a little more difficult to handle, so that we could avoid the possible occurrence of the bad term like $|t-r|^{-1}$.

\bm{$\two.1$} ($\hf t\leq r\leq t$). Using \eqref{fgderiv}, we deduce that
\begin{eqnarray*}
|\two|&=&\left| C_z \frac{\pi s}{4\sin {\nu \pi}} e^{-s^2}(\shr \partial_r)^{k-1}(\shr (\be x)^{2\nu}h_z((\be x)^2)|_{x=t+r}^{x=t-r})\right|\\
&=&\left| C_z \frac{\pi s}{4\sin {\nu \pi}} e^{-s^2}(\shr \partial_r)^{k-1}(\shr  \be^{2\nu}\int_{-1}^1 r\partial_x(x^{2\nu}h_z((\be x)^2))|_{x=t+\theta r}  d\theta)\right|\\
&\triangleq &\left|  C_z \be^{2\nu} \frac{\pi s}{4\sin {\nu \pi}} e^{-s^2}(\shr \partial_r)^{k-1}(\frac{r}{\sinh r} \hz(r,t))\right|\\
&\lesssim_{\be,k}&\sum_{l=0}^{k-1} \left|C_z e^{-s^2}r^{-(2(k-1)-l)}\partial_r^{l}\hz(r,t)\right|.
\end{eqnarray*}
By uniformly convergence for $h_z$ in Lemma  \ref{thm-series}, we find that the upper bound for $\partial_r^l \hz(r,t)$ is essentially determined by the main item $$\int_{-1}^1 (\partial_r^{l} (t+\theta r)^{2\nu-1}) h_z((\be(t+\theta r))^2) d\theta$$ for small $r,t$, 
while the remaining  items are easier to control, due to the occurrence of the higher order of ``$t+\theta r$".

When $l<k-1$, we have $\Re (2\nu -1-l)=k-l-2\geq 0$, then
$$|\partial_r^l\hz|\lesssim_k \<s\>^l e^{\pi |s|} (t+r)^{k-l-2}\ .$$
When $l=k-1\ge 1$, 
we observe that
$$\partial_r^{k-1} (t+\theta r)^{k-2-is}=P(k,s)(-is)\theta^{k-1}
(t+\theta r)^{-1-is}=P(k,s)\frac{\theta^{k-1}}{ r}\pa_\theta (t+\theta r)^{-is}\ ,$$
where $|P(k,s)|\les \<(k-2,s)\>^{k-2}$.
Then, integration by parts with respect to $\theta$, together with
Lemma  \ref{thm-series},
 gives us that
$$
|\partial_r^{k-1}\hz|\lesssim_k \<s\>^{k-2} e^{\pi |s|} r^{-1}\ .
$$

In summary, together with
 \eqref{upperboundofc},
 these estimates imply the desired bound
\begin{equation*}
|\two| \lesssim_{\be,k} \sum_{l=0}^{k-2}r^{-(2(k-1)-l)}(t+r)^{k-l-2}+r^{-(k-1)}r^{-1}\lesssim  t^{-k}.
\end{equation*}

\bm{$\two.2$} ($r\geq t$). Using \eqref{generalfunc}, we have
\begin{align*}
|\two|&=\left|  C_z \frac{\pi s}{4\sin {\nu \pi}} e^{-s^2}(\shr \partial_r)^{k-1}(\shr \be^{2\nu}\int_{-1}^1 t\partial_x(x^{2\nu}h_z((\be x)^2))|_{x=r+\theta t}  d\theta)\right|\\
&\triangleq\left| C_z \be^{2\nu} t \frac{\pi s}{4\sin {\nu \pi}} e^{-s^2}(\shr \partial_r)^{k-1}(\frac{1}{\sinh r} \hzz(r,t))\right|\\
&\lesssim_{\be,k}\sum_{l=0}^{k-1}\left|C_z e^{-s^2}t r^{-(2k-l-1)} \partial_r^l\hzz(r,t)\right|.
\end{align*}
Using
 the fact $t\partial_r=\partial_{\theta}$ for $f(r+\theta t)$,
a similar argument as in $\two.1$  yields
\begin{equation*}
|\two|\lesssim_{\be,k}\sum_{l=0}^{k-2}tr^{-(2k-l-1)} (r+t)^{k-l-2}+r^{-k} \lesssim_k t^{-k}\ ,
\end{equation*}
which completes the proof for the case $1>r>t/2>0$.

\subsection{Consider $\eqref{probb}$}\label{oddcos} 
Comparing with Subsection \ref{oddsin}, the main difference is the appearance of cosine function instead of the sine function, which mainly affects the part of proof involving series expansion.

\subsubsection{\bm{$r\geq1$}} In this case, $z-1=-k-1+is,\,1\leq m\leq k$, it follows from the same argument as that in  Subsection \ref{oddsinbigr}.

\subsubsection{\bm{$0\leq r<1,r\leq \hf t$}} Similar to Subsection \ref{oddsecond}, we have
\begin{align*}
&\partial_r\int_{-1}^1F_{z-1}^{\prime\prime}(\be(t+\theta r))d\theta\\
=&r\be^2\int_{-1}^1\int_0^{\theta}F_{z-1}^{(4)}(\be(t+\theta_1 r))d\theta_1\theta d\theta,
\end{align*}
satisfies \eqref{specialg}. Then by \eqref{specialgg} and $F_z(x)=F_z(-x)$ one has
\begin{equation*}
\begin{split}
&\eqref{probb}=\left|\hf\be se^{-s^2}(\shr\partial_{r})^{k-1}(\shr (F_{z-1}^{\prime}(\be(t+r))-F_{z-1}^{\prime}(\be(t-r))))\right|\\
&=\left|\hf\be^2 se^{-s^2}(\shr\partial_{r})^{k-1}(\frac{r}{\sinh r} \int_{-1}^1 F_{z-1}^{\prime\prime}(\be(t+\theta_0 r))d\theta_0)\right|\\
&\lesssim_{\be,k}\sum_{l=0}^{k-1}\left|se^{-s^2}\int_{-1}^1\int_0^{\theta_0}\cdots\int_0^{\theta_{l-1}}F_{z-1}^{(2l+2)}(\be(t+\theta_l r))d\theta_l\theta_{l-1}d\theta_{l-1}\cdots\theta_0d\theta_0\right|.
\end{split}
\end{equation*}
Then,
by Lemma \ref{lem-2.1}, we conclude this case when $t\geq 2$:
$$
\eqref{probb}\lesssim_{\be,k} (t-r)^{\frac{k-1}{2}}e^{-\be (t-r)}\lesssim e^{-kt}\sim(\sinh t)^{-k}\ ,
$$
and when $0<t<2$:
\begin{equation*}
\eqref{probb}\lesssim_{\be,k}(t-r)^{k-2(k-1)-2} \lesssim  t^{-k}\sim (\sinh t)^{-k}\ .
\end{equation*}
\begin{remark}
For the part with $0\leq r<1$, $t\geq 2$, we find that $\be\geq k$ is admissible only when $k=1$, 
 which is different from \eqref{prob}. 
\end{remark}

\subsubsection{\bm{$0\leq r<1,r\geq \hf t$}}

As in Subsection \ref{oddsmall}, 
by \eqref{series}, 
we consider \eqref{probb} from the perspective of series expansion,
\begin{align*}
\eqref{probb}=&\Big|\hf se^{-s^2}(\shr\partial_r)^k\left[F_{z-1}(\be|t-r|)+F_{z-1}(\be(t+r))\right]\Big|\\
=&\Big|C_z \frac{\pi s}{4\sin {\nu \pi}} e^{-s^2}(\shr \partial_r)^{k} \Big[(g_{z-1} ((\be(t-r))^2)+g_{z-1}((\be(t+r))^2))\\
+&((\be|t-r|)^{2\nu}h_{z-1}((\be(t-r))^2)+(\be(t+r))^{2\nu}h_{z-1}((\be(t+r))^2))\Big]\Big|\\
\triangleq&|\three+\four|,
\end{align*}
where $\nu=\hf(-z)=\hf(k- is)$.

The
estimate of $\three$ follows the same way as that of $\one$
\begin{align*}
|\three|&=\Big|C_z \frac{\pi s}{2\sin {\nu \pi}} e^{-s^2}(\shr \partial_r)^{k}\sum_{j=0}^{\infty}d_j^1 \be^{2j}\sum_{l=0}^{j}C_{2j}^{2l}r^{2l}t^{2j-2l}\Big|\\
&\triangleq\Big| C_z \frac{\pi s}{2\sin {\nu \pi}} e^{-s^2} \partial_{\tau}^{k}g^1(r^2)\Big|\lesssim_k 1\lesssim_k t^{-k}.
\end{align*}

Concerning \four,
similar as that of \two, 
the case $k=1$ is trivial in view of
 Lemma \ref{lem-2.3} and we consider the remaining case $k\ge 2$.

\bm{$\four.1$} ($\hf t\leq r\leq t$). For this case, we operate $\partial_r$ once to get
\begin{equation*}
\begin{split}
|\four|=&\Big|C_z \frac{\pi s}{4\sin {\nu \pi}} e^{-s^2}\be^{2\nu}(\shr\partial_r)^{k-1}\Big(\shr\cdot \\
&\Big[2\nu((t+r)^{2\nu-1}h_{z-1}((\be(t+r))^2)-(t-r)^{2\nu-1}h_{z-1}((\be(t-r))^2))\\
&+2\be((t+r)^{2\nu+1}h_{z-1}^{\prime}((\be(t+r))^2)-(t-r)^{2\nu+1}h_{z-1}^{\prime}((\be(t-r))^2))\Big]\Big)\Big|.
\end{split}
\end{equation*}
Then
we use the fundamental theorem of calculus to extract the favorite term $\frac{r}{\sinh r}$, and thus consider the main item, with
$$\left|C_z \frac{\pi s}{4\sin {\nu \pi}} e^{-s^2}
\int_{-1}^1 r^{-k}(\partial_r^{k-1}(t+\theta r)^{k-2-is})h_{z-1}((\be(t+\theta r))^2)d\theta\right| \lesssim_k r^{-k}\sim t^{-k}\ .$$

\bm{$\four.2$} ($r\geq t$). In this case, ``$\partial_r$" cannot bring the favorite form to use the fundamental theorem of calculus. To remedy this issue, we introduce an artificial error term to extract the desired form.
\begin{align*}
|\four|\leq&\Big|C_z \frac{\pi s}{4\sin {\nu \pi}}\be^{2\nu} e^{-s^2}\cdot(\shr \partial_r)^{k}\\
&\left[(r-t)^{2\nu}h_{z-1}((\be(r-t))^2)-(r+t)^{2\nu}h_{z-1}((\be(r+t))^2)\right]\Big|\\
&+\Big|2C_z \frac{\pi s}{4\sin {\nu \pi}}\be^{2\nu} e^{-s^2}(\shr \partial_r)^{k}\left[(r+t)^{2\nu}h_{z-1}((\be(r+t))^2)\right]\Big|. 
\end{align*}

The first item of the right hand side could be handled similarly as \four.1. Concerning the second item,  using
 \eqref{onlyg} and Lemma  \ref{thm-series}, we see that
it is an admissible error term:
\begin{align*}
&\Big|(\shr\partial_r)^k \left[(r+t)^{2\nu}h_{z-1}((\be(r+t))^2)\right]\Big|\\
\lesssim_{\be,k}&\sum_{1\leq j\leq k,l\leq j} r^{-(2k-j)}\Big|(\partial_r^{j-l}(r+t)^{k-is})\partial_r^lh_{z-1}((\be(r+t))^2)\Big|\\
\lesssim_{\be,k}&\<s\>^k r^{-k}e^{\pi |s|}\lesssim  e^{2\pi |s|} t^{-k}.
\end{align*}


\subsection{Consider \eqref{probbb}}\label{evensin} 

When $n=2k$ with $z=-\rho+is=1/2-k+is$,  the situation is more complicated to treat, due to the much more involved expression of  \eqref{probbb},
compared with \eqref{prob}.
In any case, it turns out that the similar argument as in Subsection \ref{oddsin} still apply.

\subsubsection{\bm{$r\geq1$}}\label{evenbig}

By \eqref{onlyg}, we need only to consider
\begin{equation*}
\begin{split}
\eqref{probbb}&\lesssim \sum_{l=0}^{k-1}\sum_{\pm}\sum_{\sum\al_j=k-1-l}\Big|se^{-s^2}\int_r^{\infty}(\prod_{1}^k \partial_u^{\al_j}\shu)\partial_u^lF_z(\be|u\pm t|)\frac{\sinh u}{(\cosh u-\cosh r)^{\hf}}du\Big|\\
&\triangleq \five_+ +\five_-.
\end{split}
\end{equation*}
For the term with plus sign,
since $u+t\geq 1$, with the help of  \eqref{bpdeinfty} in Lemma \ref{lem-2.1}, $\five_+$ could be easily bounded:
\begin{equation*}
\begin{split}
\five_+&\lesssim_{\be,n}\Big|se^{2\pi|s|-s^2}\int_r^{\infty}(\shu)^{k}(u+t)^{\hf(k-\frac{5}{2})}e^{-\be(u+t)}\frac{\sinh u}{(\cosh u-\cosh r)^{\hf}}du\Big|\\
&\lesssim_n\Big|se^{-\hf s^2}e^{-(k-\hf)t}\int_r^{\infty}e^{-(k-1)u}e^{-(k-\hf)u}\frac{1}{(\cosh u-\cosh r)^{\hf}}du\Big|\\
&\lesssim (\sinh t)^{-k+\hf}\ ,
\end{split}
\end{equation*}
as $\be>\rho$.

On the other hand, the term with minus sign is much more involved to control, for which we 
split the integral into several cases, depending on the size of
$u$ compared with $t$. Let us begin with the easier case with $|u-t|\geq1$, for which  it is clear that, for any $\rho<\be_0<\be$, $$|\partial_u^lF_z(\be|u-t|)|\lesssim_{\be,k}|u-t|^{\hf(k-\frac{5}{2})}e^{-\be|u-t|}\lesssim_k e^{-\be_0|u-t|}\leq 1\ .$$
Then, if $u\geq t+1$, we get
\begin{equation*}
\begin{split}
&\int_{r\vee (t+1)}^{\infty}(\shu)^{k}\frac{\sinh u}{(\cosh u-\cosh r)^{\hf}}du\\
\lesssim&\int_{r\vee (t+1)}^{(r+1)\vee (t+2)}(\shu)^{k}\frac{\sinh u}{(\cosh u-\cosh r)^{\hf}}du+\int_{(r+1)\vee (t+2)}^{\infty}e^{(-k+\hf)u}du\\
\lesssim &e^{-k (r\vee (t+1))}(\cosh u-\cosh r)^{\hf}|_{u=r\vee (t+1)}^{(r+1)\vee (t+2)}+e^{(-k+\hf)t}\\
\lesssim &(\sinh t)^{-k+\hf}\ ,
\end{split}
\end{equation*}
where we have used the fact that
$\cosh u-\cosh r\gtrsim \cosh u\gtrsim e^u$ if $u\ge r+1$, as well as 
$\pa_u(\cosh u-\cosh r)^{\hf}=\hf(\cosh u-\cosh r)^{-\hf}\sinh u$.
On the other hand, if $u\leq t-1$, we have 
$\cosh u-\cosh r\simeq e^r (u-r)$ for $u\in [r, r+1]$ with $r\ge 1$,
and so
\begin{equation*}
\begin{split}
&\int_{r}^{t-1}(\shu)^{k}\frac{\sinh u}{(\cosh u-\cosh r)^{\hf}}e^{-\be_0(t-u)}du \\
\lesssim &e^{-\be_0t} \int_r^{(t-1)\wedge (r+1)}e^{(\be_0-\rho)r}(u-r)^{-\hf}du + e^{-\be_0t}\int_{(t-1)\wedge (r+1)}^{t-1}e^{(\be_0-\rho)u}du \\
\lesssim &(\sinh t)^{-\rho}\ .
\end{split}
\end{equation*}

Turning to the case of $|u-t|\le 1$.
By  \eqref{bpdezero} in Lemma \ref{lem-2.1}, $|\partial_u^lF_z(\be |u-t|)|\lesssim_k e^{2\pi |s|}$ if $l<k-1$ and $u\neq t$.
As $\f^{-1}(F_{-k+\hf+is}^{(l)})(\e)=(2\pi)^{-1}(-i\e)^l\<\e\>^{-k+\hf+is}\in L^1(\R)$ for $l<k-1$, the estimate applies also for $u=t$.
Thus,
 it remains to deal with $l=k-1$:
\beq\label{mainpart}
\begin{split}
&\int_{\{u\geq r,|u-t|\leq1\}}(\shu)^{k}\frac{\sinh u}{(\cosh u-\cosh r)^{\hf}}du\\
+&\Big|se^{-s^2}\int_{\{\cdots\}}(\shu)^{k}\frac{\sinh u}{(\cosh u-\cosh r)^{\hf}} \partial_u^{k-1}\f(\<\e\>^{-k+\hf+is})(\be(u-t)) du\Big|,
\end{split}
\eeq
Here,
as in the proof of Lemma \ref{lem-2.3},
 the term $\partial_u^{k-1}\f(\<\e\>^{-k+\hf+is})(\be(u-t))$
could be reduced to $\partial_u^{k-1}\f(|\e|^{-k+\hf+is})(\be(u-t))$, which is a homogeneous distribution of degree $-\hf-is$.
That is,
\begin{equation*}
\begin{split}
\eqref{mainpart}\lesssim_{\be,n}&\int_{\{u\geq r, |u-t|\le 1\}}(\shu)^{k}\frac{\sinh u}{(\cosh u-\cosh r)^{\hf}}du\\
&+\Big|se^{-\hf s^2}\int_{\{\cdots\}}(\shu)^{k}\frac{\sinh u}{(\cosh u-\cosh r)^{\hf}} |u-t|^{-\hf-is}du\Big|\\
\triangleq&A+B.
\end{split}
\end{equation*}
The first term is easy to control, as $r<t+1$,
\begin{align*}
A&=\int_{r\vee t-1}^{t+1}(\shu)^{k}\frac{\sinh u}{(\cosh u-\cosh r)^{\hf}}du\\
&\lesssim\int_{r\vee t-1}^{t+1}e^{-ku}\frac{\sinh u}{(\cosh u-\cosh r)^{\hf}}du\\
&\lesssim e^{-kt}(\cosh (t+1))^{\hf}\\
&\lesssim_n (\sinh t)^{-k+\hf}.
\end{align*}
Finally, we treat the delicate term $B$.\\
\uline{\bm{$B.1(0\leq r-t\leq \hf)$}:}
\begin{align*}
B=&\Big|se^{-\hf s^2}\int_r^{t+1}(\sinh u)^{-k}(u-t)^{-\hf-is}d(\cosh u-\cosh r)^{\hf}\Big|\\
\lesssim_k&\Big|(\sinh u)^{-k} (u-t)^{-\hf-is}(\cosh u-\cosh r)^{\hf}|_{u=r}^{t+1}\Big|\\
&+\int_r^{t+1}(\sinh u)^{-k-1}\cosh u (\cosh u-\cosh r)^{\hf}(u-t)^{-\hf}du\\
&+\Big|se^{-\hf s^2}\<s\>\int_r^{t+1}(\sinh u)^{-k} (\frac{\cosh u-\cosh r}{u-t})^{\hf}(u-t)^{-1-is}du\Big|\\
\lesssim&(\sinh t)^{-k+\hf}+\Big|e^{- s^2/3}\int_r^{t+1}(\sinh u)^{-k} (\frac{\cosh u-\cosh r}{u-t})^{\hf}\partial_u(u-t)^{-is}du\Big|\\
\lesssim&(\sinh t)^{-k+\hf}+\int_r^{t+1}(\sinh u)^{-k}\partial_u(\frac{\cosh u-\cosh r}{u-t})^{\hf}du\\
\lesssim&(\sinh t)^{-k+\hf}+(\sinh t)^{-k}\int_r^{t+1}\partial_u(\frac{\cosh u-\cosh r}{u-t})^{\hf}du\lesssim(\sinh t)^{-k+\hf},
\end{align*}
where we have used the fact that $\partial_u(\frac{\cosh u-\cosh r}{u-t})^{\hf}$ is positive.
\\
\uline{\bm{$B.2(r-t\geq \hf)$}:} 
Although this case could be included in B.1,
we choose to present it with a simpler proof. Actually, by mean value theorem, we have
\begin{align*}
B\lesssim (\sinh t)^{-k+\hf}\int_r^{t+1}(u-r)^{-\hf}(u-t)^{-\hf}du\lesssim (\sinh t)^{-k+\hf}.
\end{align*}
\uline{\bm{$B.3(0\leq t-r\leq\hf)$}:} Similar as $B.1$, we find that $\partial_u(\frac{u-t}{\cosh u-\cosh r})^{\hf}$ is positive in $[t,t_0]$ and negative in $[t_0,t+1]$ with some $t_0\in [t,t+1]$. Equipped with this information, we get
\begin{align*}
B\leq&(\sinh r)^{-k+\hf}\int_r^{t}(u-r)^{-\hf}(t-u)^{-\hf}du\\
&+\Big|e^{-\hf s^2}\int_t^{t+1}
(\sinh u)^{1-k}
(\frac{u-t}{\cosh u-\cosh r})^{\hf}\partial_u(u-t)^{-is}du\Big|\\
\lesssim_k&(\sinh t)^{-k+\hf}\int_0^{1}(1-v)^{-\hf}v^{-\hf}dv+(\sinh t)^{-k+\hf}\\
&+(\sinh t)^{1-k} \Big[\int_t^{t_0}\partial_u(\frac{u-t}{\cosh u-\cosh r})^{\hf}du-\int_{t_0}^{t+1}\partial_u(\frac{u-t}{\cosh u-\cosh r})^{\hf}du\Big]\\
\lesssim&(\sinh t)^{-k+\hf}+(\sinh t)^{1-k}[2(\frac{u-t}{\cosh u-\cosh r})^{\hf}|_{u=t_0}-(\frac{u-t}{\cosh u-\cosh r})^{\hf}|_{u=t+1}]\\
\lesssim&(\sinh t)^{-k+\hf}.
\end{align*}
\uline{\bm{$B.4(t-r\geq \hf)$}:} 
When $r\geq t-1$, it follows that
\begin{eqnarray*}
B & \les & 
\int_{r}^{t+1}(\sinh u)^{-k}\frac{\sinh u}{(\cosh u-\cosh r)^{\hf}} |u-t|^{-\hf}du
\\
 & \les & 
\int_r^{t+1}(\sinh r)^{-k+\hf}(u-r)^{-\hf}|u-t|^{-\hf}du\lesssim_k (\sinh t)^{-k+\hf}
\end{eqnarray*}
Else, if $1\leq r\leq t-1$, we have
\begin{eqnarray*}
B&\lesssim&\int_{t-1}^{t+1}(\sinh u)^{1-k}(\cosh u-\cosh r)^{-\hf} |u-t|^{-\hf}du\\
&\lesssim&\int_{t-1}^{t+1}(\sinh (t-1))^{-k+\hf}(u-(t-1))^{-\hf}|u-t|^{-\hf}du\\
&\lesssim_k&(\sinh t)^{-k+\hf}.
\end{eqnarray*}

\subsubsection{\bm{$0\leq r<1,r< \hf t$}}\label{rhft}

Let $H(u)=\frac{\sinh u}{(\cosh u-\cosh r)^{\hf}}$,
 we split the integral with respect to $r+1$
\begin{eqnarray}
\eqref{probbb}&\le & 
\Big|\frac{s}{e^{s^2}}\int_{r+1}^{\infty}
H(u)\left(\frac{\partial_u}{\sinh u}\right)^{k-1}\frac{F_z(\be(u-t))-F_z(\be(u+t))}{\sinh u}du
\Big| \nonumber\\
&+&\Big|\frac{s}{e^{s^2}}\int_{r}^{r+1}
H(u)\left(\frac{\partial_u}{\sinh u}\right)^{k-1}\frac{F_z(\be(u-t))-F_z(\be(u+t))}{\sinh u}du
\Big| \nonumber\\
&\triangleq& C+D.
\label{rlhft}
\end{eqnarray}

As in Subsection \ref{evenbig}, it is easier to analyze $C$.
Actually, it is trivially bounded for the cases of $u+t\ge 1$ or $|u-t|\ge 1/2$, when we could apply
 \eqref{bpdeinfty} in  Lemma \ref{lem-2.1}. For the remaining case with $|u-t|\le 1/2$, we have $t\ge 1/2$, $u\sim t$ and
$H(u)\les (\sinh t)^{1/2}$. Then 
  \eqref{bpdezero} in Lemma \ref{lem-2.1}  and \eqref{generalfunc}  gives us
$$
C\les (\sinh t)^{1/2-k}+(\sinh t)^{1/2-k}\int_{(t-\hf)\vee (r+1)}^{t+\hf} 1\vee|u-t|^{-\hf}du\les (\sinh t)^{1/2-k}\ .
$$

Concerning $D$, by the analysis under \eqref{mainpart}, the possible singularity of the integrand near $u=t$ is at most of the order $-1/2$, which is integrable.
As in Subsections \ref{oddsecond} and \ref{oddsmall}, we split the integral depending on the sign of $u-t$: 
\begin{eqnarray}
D&\le&\sum_{l=0}^{k-1}\Big|\frac{s}{e^{s^2}}\int_{u<t,
u\in [r, r+1]
} H(u)\int_{-1}^1\cdots\int_0^{\theta_{l-1}}F_z^{(2l+1)}(\be(t+\theta_l u))d\theta_l\cdots\theta_0 d\theta_0du\Big|\nonumber\\
&&+\Big|\frac{s}{e^{s^2}}\int_{u>t,
u\in [r, r+1]}H(u)\left(\frac{\partial_u}{\sinh u}\right)^{k-1}\frac{F_z(\be(u-t))-F_z(\be(u+t))}{\sinh u}du\Big| 
\label{manyintegral}\\
&\triangleq& D_1+D_2\ .\nonumber
\end{eqnarray}
Notice that owing to Taylor's expansion, we have
$\cosh u-\cosh r\ge (u^2-r^2)/2$ for $u\ge r$, $\sinh u\les u$ for $u\les 1$, and so
\begin{align*}
H(u)=\frac{u}{(u+r)^{\hf}(u-r)^{\hf}}\frac{\sinh u}{u}\frac{(u^2-r^2)^{\hf}}{(\cosh u-\cosh r)^{\hf}}\les\frac{u^{\hf}}{(u-r)^{\hf}}.
\end{align*}

Concerning $D_1$, 
it is trivially bounded when $t\gg 1$, in view of
 \eqref{bpdeinfty} in  Lemma \ref{lem-2.1}.
For the remaining case $t\les 1$,
by \eqref{bpdezero} in Lemma \ref{lem-2.1}, we have
$$F_z^{(2l+1)}(\be(t+\theta_l u))\les e^{2\pi |s|}(1\vee(t+\theta_l u)^{k-5/2-2l})\ .
$$
When $0< t-u\leq 1$ and $
k-5/2-2l<0$, the estimate introduces artificial singularity. To remedy this possible issue, we use integration by parts to get
$$\left|\int_{-1}^{1}\int_0^{\theta_0}\cdots\int_0^{\theta_{l-1}}(t+\theta_l u)^{k-\frac{5}{2}-2l}d\theta_l\cdots \theta_0 d\theta_0\right|
\lesssim u^{-l-1}|(t+\theta u)^{k-3/2-l}|_{\theta=-1}^1|\ ,
$$
for any $l\in [0, k-1]$,
which is controlled by $$\sum_{l\le k-2} t^{k-3/2-l} u^{-l-1}+u^{-k}(t-u)^{-1/2}\les \sum_{l\le k-2} t^{k-5/2-2l}+u^{-k}(t-u)^{-1/2}\les
t^{-k}(t-u)^{-1/2}
 $$
when $t/2\leq u\leq t$ and $u\in  [r,r+1]$.
Then we obtain the desired result for $D_1$ as follows
\begin{eqnarray*}
D_1 & \les &\sum_{l=0}^{k-1} \int_{u<t/2,
u\in [r, r+1]
}\frac{u^{\hf}}{(u-r)^{\hf}}(1\vee t^{k-\frac{5}{2}-2l})du \\
 &  & +\int_{u\in [t/2, t]\cap  [r, r+1]
}  \frac{u^{\hf}}{(u-r)^{\hf}} t^{-k}(t-u)^{-1/2} du
\\
&\lesssim&
\sum_{l=0}^{k-1}(1\vee t^{k-\frac{5}{2}-2l}) \int_{
u\in [r, r+1]}\frac{u^{\hf}}{(u-r)^{\hf}}du 
+t^{1/2-k}\int_{u\in [r, t]
}  \frac{1}{\sqrt{(u-r) (t-u)}}  du \\
& \lesssim& t^{1/2-k}.
\end{eqnarray*}

Turning to $D_2$, for which we have $t\leq r+1$. By the series argument as in 
\eqref{series} and
Subsection \ref{oddsmall}, one can find that the part of $g_z$ is easily bounded by uniform constant. For the part of $h_z$, we keep the initial form and consider only the main item like \two.1.

Let $\nu=\hf(k-\frac{3}{2}-is)$ and $$\left(\shu\right)_l^{k-1}:=\prod\limits_{\al_1+\cdots+\al_k=k-1-l}\partial_u^{\al_j}\left(\shu\right)\cdot u^{2k-l-1}\ .$$
By \eqref{onlyg}, the main items in $D_2$ are of the following form, with 
$l\in [0,k-1]$,
\beq\label{trick}
\begin{split}
& \Big|se^{-s^2}\int_{t }^{r+1}\frac{ u^{1+l-2k} \sinh u}{(\cosh u-\cosh r)^{\hf}} \left(\shu\right)_l^{k-1}(u\pm t)^{k-\frac{3}{2}-l-is}h_z(\be^2(u\pm t)^2) du\Big|\\
=& \Big|e^{-s^2}\int_{t }^{r+1}\frac{(u\pm t)^{\hf}}{(\cosh u-\cosh r)^{\hf}}\frac{\sinh u}{u}(\uf)^{k-1}(\frac{u\pm t}{u})^{k-1-l}\left(\shu\right)_l^{k-1}h_z d(u\pm t)^{-is} \Big|\\
\lesssim&t^{1/2-k}+ \int_{t }^{r+1}\left|\partial_u\left(\frac{(u\pm t)^{\hf}}{(\cosh u-\cosh r)^{\hf}}\frac{\sinh u}{u}(\uf)^{k-1}(\frac{u\pm t}{u})^{k-1-l}\left(\shu\right)_l^{k-1}\right)\right|du,
\end{split}
\eeq
where 
we have used the fact that 
$\left(\shu\right)_l^{k-1}$, $h_z$,
$\partial_u h_z$ are controlled by $e^{\pi |s|}$ by \eqref{sinhr} and Lemma  \ref{thm-series}.
For the last integral, as $\partial_u \frac{\sinh u}{u}$ is bounded, $\partial_u  \left(\shu\right)_l^{k-1}$ is controlled by $u^{-1}$, and $u-r\ge u/2$ as $u>t>2r$,
 we obtain
\beq\label{other}
\begin{split}
D_2\les&t^{1/2-k}+\sum_{\pm}\int_t^{r+1}\frac{(u\pm t)^{\hf}}{(u^2-r^2)^{\hf}}u^{-k}du\\
&+\int_t^{r+1}\left[\frac{(u\pm t)^{-\hf}}{(u^2-r^2)^{\hf}}+\frac{(u\pm t)^{\hf} \sinh u}{(u^2-r^2)^{3/2}}\right] u^{1-k}du\\
&+\sum_{l\le k-2}\int_t^{r+1}\frac{(u\pm t)^{\hf}}{(u^2-r^2)^{\hf}}u^{1-k}(\frac{u\pm t}{u})^{k-2-l}\frac{t}{u^2}du\\
\lesssim&t^{1/2-k}+\int_t^{r+1} (u-t)^{-\hf} u^{-k}du
 \\
 \lesssim&t^{1/2-k}+\int_{t}^{r+1} u^{-\hf-k}du \lesssim t^{-k+\hf}.
\end{split}
\eeq


\subsubsection{\bm{$0\leq r<1,r\geq \hf t$}}\label{bothrtsmall}
In this case, we have $t\le 2r\leq r+1$. The part $C$ as in \eqref{rlhft} could be handled as before. When $\hf t\leq r\leq t$, the first item of \eqref{manyintegral} appears,
and the same analysis gives us
\begin{eqnarray*}
D_1 & \les &
  \int_r^{t\wedge (r+1)
}  \frac{u^{\hf}}{(u-r)^{\hf}} t^{-k}(t-u)^{-1/2} du
\\
&\lesssim&
t^{1/2-k}\int_{u\in [r, t]
}  \frac{1}{\sqrt{(u-r) (t-u)}}  du  \lesssim  t^{1/2-k}.
\end{eqnarray*}

For the remaining part of $D$, i.e., $D_2$, it is similar to that  in Subsection \ref{rhft}, and we will present only the details for the main item of the part involving $h_z$ with $0\leq l\leq k-1$.

\uline{$\hf t\leq r\leq t$, ``$u-t$":} For this case, we have
\begin{align*}
&\Big|se^{-s^2}\int_t^{r+1}\frac{u^{1+l-2k}\sinh u}{(\cosh u-\cosh r)^{\hf}} \left(\shu\right)_l^{k-1}(u-t)^{k-\frac{3}{2}-l-is}h_z(\be^2(u-t)^2)du\Big|\\
=&\Big|e^{-s^2}\int_t^{r+1}\frac{(u-t)^{\hf}}{(\cosh u-\cosh r)^{\hf}}\frac{\sinh u}{u}(\uf)^{k-1}\left(\shu\right)_l^{k-1}(\frac{u-t}{u})^{k-1-l}h_z d (u-t)^{-is}\Big|\\
\lesssim&t^{1/2-k}+\int_t^{r+1}\Big|\partial_u\Big(\frac{(u-t)^{\hf}}{(\cosh u-\cosh r)^{\hf}}\frac{\sinh u}{u}(\uf)^{k-1}\left(\shu\right)_l^{k-1}(\frac{u-t}{u})^{k-1-l}\Big)\Big|du\ .
\end{align*}
With the similar argument in \eqref{other}, we obtain
\beq\label{eq-final}
\begin{split}
D_2^{-}\les& t^{1/2-k}+\int_t^{r+1}\frac{(u-t)^{\hf}}{(u^2-r^2)^{\hf}}u^{-k}du\\
&+\sum_{l\le k-2}\int_t^{r+1}\frac{(u- t)^{\hf}}{(u^2-r^2)^{\hf}}u^{1-k}(\frac{u- t}{u})^{k-2-l}\frac{t}{u^2}du\\
&+\int_t^{r+1}\Big|\frac{\cosh u-\cosh r-(u-t)\sinh u}{(u-t)^{\hf}(\cosh u-\cosh r)^{\frac{3}{2}}}\Big| u^{1-k}du\\
\lesssim&t^{1/2-k}+E\ ,
\end{split}
\eeq
where $$E=\int_t^{r+1}\Big|\frac{\cosh u-\cosh r-(u-t)\sinh u}{(u-t)^{\hf}(\cosh u-\cosh r)^{\frac{3}{2}}}\Big| u^{1-k}du\ .
$$
For the estimate of $E$, we observe 
 from the mean value theorem that, for some $\gamma\in[r,u]$, we have
\begin{align*}
&|(\cosh u-\cosh r)-(u- t)\sinh u|\\
=&|(\cosh u-\cosh r)-(u-r)\sinh u-(r- t)\sinh u|\\
=&|(u-r)\sinh \gamma-(u-r)\sinh u-(r-t)\sinh u|\\
\leq&(u-r)(\sinh u-\sinh r)+(t- r)\sinh u\\
\lesssim&(u-r)^2+u(t- r)\ .
\end{align*}
Then, $E$ could be trivially bounded for the exceptional case $t=r$:
$$E\les  \int_t^{r+1}\Big|\frac{(u-t)^2 }{(u-t)^{2}(u+t)^{3/2}}\Big|u^{1-k}du
\les \int_t^{r+1}u^{-1/2-k}du
\lesssim t^{-k+\hf}\ .$$
On the other hand, for $r<t$, we obtain
\begin{align*}
E\les  &\int_t^{r+1}  \frac{(u-r)^2+u(t-r)}{(u-t)^{\hf}(u+r)^{\frac{3}{2}}(u-r)^{\frac{3}{2}}} u^{1-k}du\\
\les  &\int_t^{r+1}  
(u-t)^{-1/2}u^{-k}+
(t-r)(u-t)^{-1/2}(u-r)^{-3/2}u^{1/2-k} du\\
\lesssim &
1+\int_t^{r+1}  \frac{(u-t)^{1/2}}{u^{k+1}}du+
(t-r) t^{1/2-k}(1+  \int_t^{r+1} 
\frac{(u-t)^{1/2}}{(u-r)^{5/2}}du)
\\\les &
1+\int_t^{r+1}  {u^{-1/2-k}}du+
 t^{1/2-k}(1+  \int_t^{r+1} 
\frac{(t-r)}{(u-r)^{2}}du)
\\
\lesssim&t^{1/2-k }.
\end{align*}

\uline{$\hf t\leq r\leq t$, ``$u+t$":} 
For this case, the worst term is of the form
$$\Big|se^{-s^2}\int_t^{r+1}\frac{u^{1+l-2k}\sinh u}{(\cosh u-\cosh r)^{\hf}} \left(\shu\right)_l^{k-1}(u+t)^{k-\frac{3}{2}-l-is}h_z(\be^2(u+t)^2)du\Big|\ .$$ 
Since the derivative behaves similarly on $\left(\shu\right)_l^{k-1}$ as it does on $(\frac{u+ t}{u})^{k-3/2-l-is}$, we will ignore $\left(\shu\right)_l^{k-1}$ in what follows.
To control the worst term,  we use
integration by parts to control boundary term
\beq\label{trickk}
\begin{split}
&\quad\Big|(\cosh u-\cosh r)^{\hf}(\uf)^{2k-l-1} (u+t)^{k-\frac{3}{2}-l-is}h_z|_{u=t}^{ r+1}\Big|\\
&+\Big|se^{-s^2}\int_t^{r+1}(\cosh u-\cosh r)^{\hf}\partial_u\left[(\uf)^{2k-l-1} (u+t)^{k-\frac{3}{2}-l-is}h_z\right]du\Big|\\
\lesssim&t^{-k+\hf}+\Big|se^{-s^2}\int_t^{r+1}(\cosh u-\cosh r)^{\hf}(\uf)^{2k-l}(u+t)^{k-\frac{3}{2}-l-is}h_zdu\Big|\\
&+\Big|se^{-s^2}\int_t^{r+1}(\cosh u-\cosh r)^{\hf}(\uf)^{2k-l-1}(u+t)^{k-\frac{5}{2}-l-is}h_zdu\Big|\\
\lesssim&t^{-k+\hf}+\Big|e^{-s^2}\int_t^{r+1}(\frac{\cosh u-\cosh r}{u+t})^{\hf}(\uf)^k(\frac{u+t}{u})^{k-l}h_z\partial_u(u+t)^{-is}du\Big|\\
&+\Big|e^{-s^2}\int_t^{r+1}(\frac{\cosh u-\cosh r}{u+t})^{\hf}(\uf)^k(\frac{u+t}{u})^{k-1-l}h_z\partial_u(u+t)^{-is}du\Big|.
\end{split}
\eeq
Similarly, we focus only on the  case when the derivative acts on the first term:
\beq\label{trickkk}
\begin{split}
&\int_t^{r+1}\Big|\frac{(u+t)\sinh u-\cosh u+\cosh r}{(\cosh u-\cosh r)^{\hf}(u+t)^{\frac{3}{2}}}\Big|(\uf)^kdu\\
\lesssim&\int_t^{r+1}\frac{(u+t)^2}{( u-r)^{\hf}( u+r)^{\hf}(u+t)^{\frac{3}{2}}}(\uf)^kdu\\
\lesssim&\int_t^{r+1}( u-r)^{-\hf}u^{-k}du\\
\lesssim&( u-r)^{\hf}u^{-k}|_{u=t}^{r+1}+\int_t^{r+1}( u-r)^{\hf}u^{-k-1}du \lesssim t^{-k+\hf}\ .
\end{split}
\eeq

\uline{$r\geq t$, $u\pm t$:} In this case, the  interval of integration  turns to $\int_r^{r+1}$. By \eqref{trickk} and \eqref{trickkk}, the part of ``$u+t$" is the same after replacing $t$ by  $r$. For the part of $u-t$, it is parallel to 
\eqref{eq-final},
after the integration by parts as in \eqref{trickk}, where the worst term is controlled as follows
\begin{eqnarray*}
E&\les&
\int_r^{r+1}\Big|\pa_u (\frac{\cosh u-\cosh r}{u-t})^{\hf}\Big|(\uf)^kdu\\
&\les&
\int_r^{r+1}\Big|\frac{(u-t)\sinh u-\cosh u+\cosh r}{(\cosh u-\cosh r)^{\hf}(u-t)^{\frac{3}{2}}}\Big|(\uf)^kdu\\
&\lesssim&\int_r^{r+1}\frac{(u-r)^2+u(r-t)}{(u-r)^{\hf}(u+r)^{\hf}(u-t)^{\frac{3}{2}}}(\uf)^kdu\\
&\lesssim&\int_r^{r+1}u^{-1/2-k}du+r^{1/2-k}(r-t)
\int_r^{r+1}\frac{1}{(u-r)^{\hf} (u-t)^{\frac{3}{2}}} du
\\&\les &r^{1/2-k}+r^{1/2-k}\left[1+\int_r^{r+1}\frac{(r-t)(u-r)^{\hf} }{(u-t)^{\frac{5}{2}}} du
\right]\\
&\lesssim&t^{1/2-k}+t^{1/2-k}\int_r^{r+1}\frac{r-t}{(u-t)^{{2}}} du\ 
\les\ t^{1/2-k}\ .
\end{eqnarray*}

\subsection{Consider \eqref{probbbb}}\label{evencos}
The proof of
\eqref{probbbb} follows essentially from the similar proof as that in Subsections \ref{oddcos} and \ref{evensin}, for which we leave the details to the interested readers.

\section{Appendix}\label{sec:appen}
\subsection{Gamma function}
In this part, we recall some fundamental and useful properties of Gamma function to be used, see, e.g., \cite[Appendix A.7]{MR3243734}\cite[(2.5.8)]{MR2683157}.

For real $x$ and $y\neq0$,
\begin{align}
\label{ga}&\ga(x+1+iy)=(x+iy)\ga(x+iy),\\
\label{gaa}&|\ga(x+iy)|\leq |\ga(x)|\ \ ,\ -x\notin\mathbb{N},\\
\label{gam}&\Big|\frac{1}{\ga(x+iy)}\Big|\leq \Big|\frac{1}{\ga(x)}\Big|e^{C(x)|y|^2}\ \ -x\notin\mathbb{N},\\
\label{gammm}&\Big|\frac{1}{\ga(-N+iy)}\Big|\leq |iy||1+iy||2+iy|\cdots |N+iy|e^{|y|^2}\ \ N\in\mathbb{N},\\
\label{gammmm}&\Big|\frac{1}{\ga(x+iy)}\Big|\leq |y|^{\hf-x}e^{\hf \pi |y|}\ \ as\ |y|\to\infty.
\end{align}
where $C(x)=\hf\sum_{k=0}^{\infty}\frac{1}{(k+x)^2}$.
Based on $\eqref{gam}, \eqref{gammm}, \eqref{gammmm}$, we obtain
\begin{equation}\label{gaaa}
\Big|\frac{1}{\ga(x+iy)}\Big|\les_x e^{\pi|y|}\ . 
\end{equation}

\subsection{Modified Bessel functions: proof of \eqref{upperboundofk}}\label{sec:appen.2}
Let $\Re\nu\geq0,x>0$,  by \cite[\S 6.3]{MR1349110}, we have \eqref{eq-Knu}, i.e.,
$$K_{\nu} (x)=\frac{(\hf \pi )^{\hf} x^{\nu} e^{-x}}{\ga(\nu +\hf)} \int_0^{\infty} e^{-x\tau} (\tau+\hf\tau^2)^{\nu-\hf} d\tau .$$
To prove, we split the integral into two parts
\begin{equation*}
K_{\nu}(x)=\frac{(\hf \pi )^{\hf} x^{\nu} e^{-x}(\hf)^{\nu-\hf}}{\ga(\nu +\hf)}(\int_0^1 e^{-x\tau} (2\tau+\tau^2)^{\nu-\hf} d\tau+\int_1^{\infty}e^{-x\tau} (2\tau+\tau^2)^{\nu-\hf} d\tau).
\end{equation*}
The first integral could be treated as follows:
\beq\label{01}
\begin{split}
\Big|\int_0^1e^{-x\tau} (2\tau+\tau^2)^{\nu-\hf} d\tau\Big|&\leq 3^{\Re\nu-\hf}\int_0^1 e^{-x\tau}\tau^{\Re\nu-\hf}d\tau\\
&\lesssim_{\Re\nu}(x^{-\Re\nu-\hf}\int_0^xe^{-t}t^{\Re\nu-\hf}dt\wedge\int_0^1 \tau^{-\hf}d\tau)\\
&\lesssim_{\Re\nu}(x^{-\Re\nu-\hf}\ga(\Re\nu+\hf)\wedge 1)\\
&\lesssim_{\Re\nu}(x^{-\Re\nu-\hf}\wedge 1).
\end{split}
\eeq
Concerning the second integral, we expect 
the main contribution comes from
that of $(\tau^2)^{\nu-\hf}=\tau^{2\nu-1}$.
To illustrate this, we extract the main term as follows:
\beq\label{0infty}
\begin{split}
&\Big|\int_1^{\infty}e^{-x\tau} (2\tau+\tau^2)^{\nu-\hf} d\tau\Big|\\
\leq&\Big|\int_1^{\infty}e^{-x\tau} \tau^{2\nu-1} d\tau\Big|+\Big|\int_1^{\infty}e^{-x\tau}\tau^{\nu-\hf} [(2+\tau)^{\nu-\hf}-\tau^{\nu-\hf}]d\tau\Big|\\
\leq&x^{-2\Re\nu}\Big|\int_x^{\infty}e^{-t} t^{2\nu-1} dt\Big|+C_{\Re\nu}|2\nu-1|x^{-2\Re\nu+1}\int_x^{\infty}e^{-t}t^{2\Re\nu-2}dt\\
:=&A_1+C_{\Re\nu}|2\nu-1|A_2.
\end{split}
\eeq

We first compute the remainder term $A_2$. As $\Re\nu\geq0$, for any $m>1$ 
with $2\Re\nu+m-2\ge 0$,  we have
\beq\label{a2}
\begin{split}
A_2&=x^{-2\Re\nu+1}\int_x^{\infty}t^{-m}e^{-t}t^{2\Re\nu+m-2}dt\\
&\leq C_{\Re\nu,m}x^{-2\Re\nu+1}\int_x^{\infty}t^{-m}dt\\
&=C^1_{\Re\nu,m}x^{-2\Re\nu-m+2}.
\end{split}
\eeq
Similarly, for $A_1$, we get for $m>1$,
\begin{equation}\label{a11}
A_1\leq x^{-2\Re\nu}\int_x^{\infty}t^{-m}e^{-t}t^{2\Re\nu+m-1}dt\leq C^2_{\Re\nu,m} x^{-2\Re\nu-m+1}.
\end{equation}
On the other hand, when $\Re\nu>0$, we have a better estimate for $A_1$:
\begin{equation}\label{a12}
A_1\leq x^{-2\Re\nu}\ga(2\Re\nu)\leq C_{\Re\nu}x^{-2\Re\nu}.
\end{equation}

In summary,  when $x\geq1$, it follows from
\eqref{gaaa}, \eqref{01}, \eqref{a2}, \eqref{a11} with $m=3$ that
$$|K_{\nu}(x)|\leq C_{\Re\nu} e^{\pi|\Im\nu|}x^{-\hf}e^{-x}\ .$$
Else, if $x\leq 1$, by  \eqref{01}, \eqref{a2} with $m=2$,  and \eqref{a12}, we have
\begin{equation*}
|K_{\nu}(x)|\leq 
C_{\Re\nu}e^{\pi|\Im\nu|} x^{-\Re\nu}\ ,
\end{equation*}
if $\Re \nu>0$.

The previous estimate for $\Re\nu=0$ near $x=0$ is not good, due to the 
integrability requirement $m>1$ from \eqref{a11}. Instead, when $x<1$ and $\Re\nu=0$ (i.e. $\nu=is$), we can deal with $A_1$ in the following way:
\begin{equation*}
\Big|\int_x^{\infty}e^{-t}t^{2is-1}dt\Big|\leq \int_x^1+\int_1^{\infty}e^{-t}t^{-1}dt\lesssim |\ln x|+1\ .
\end{equation*}
Alternatively, when
 $s\neq 0$, we can make use of  the power  $-1+2is$
\begin{equation*}
\Big|\int_x^{\infty}e^{-t}t^{2is-1}dt\Big|=\Big|\frac{1}{2is}\int_x^{\infty}e^{-t}dt^{2is}\Big|\leq \Big|\frac{1}{2is}e^{-x}\Big|+\Big|\frac{1}{2is}\int_x^{\infty}e^{-t}t^{2is}dt\Big|\lesssim |s|^{-1}.
\end{equation*}
Together with
 \eqref{01}, \eqref{a2} with $m=2$, we   obtain the following bound 
$$|K_{is}(x)|\leq C e^{\pi|s|}\left[\left(\frac{1}{|s|}\wedge\ln\frac{1}{x}\right)+1\right]\ , \forall x\in (0,1).$$
which completes the proof.

\end{document}